\newtheorem{theorem}{Theorem}
\newtheorem{corollary}[theorem]{Corollary}
\newtheorem{lemma}[theorem]{Lemma}
\newtheorem{proposition}[theorem]{Proposition}
\newenvironment{proof}[1][Proof]{\noindent\textbf{#1.} }{\ \rule{0.5em}{0.5em}}
\newcommand{\opnorm}{\@ifstar\@opnorms\@opnorm}
\newcommand{\@opnorms}[1]{%
  \left|\mkern-1.5mu\left|\mkern-1.5mu\left|
   #1
  \right|\mkern-1.5mu\right|\mkern-1.5mu\right|
}
\newcommand{\@opnorm}[2][]{%
  \mathopen{#1|\mkern-1.5mu#1|\mkern-1.5mu#1|}
  #2
  \mathclose{#1|\mkern-1.5mu#1|\mkern-1.5mu#1|}
}
\begin{document}

\title{\textbf{Blow-up at space infinity for solutions of a system of  non-autonomous semilinear heat equations}}
\author{{\textbf {Gabriela de Jes\'us Cabral-Garc\'ia}}\\
Universidad Aut\'onoma de Aguascalientes\\
Departamento de Matem\'{a}ticas y F\'isica\\
Aguascalientes, Aguascalientes, M\'exico\\
\ttfamily {gaby.cabral905@gmail.com}
\bigskip\\
{\textbf {Jos\'e Villa-Morales}}\\
Universidad Aut\'onoma de Aguascalientes\\
Departamento de Matem\'aticas y F\'isica\\
Aguascalientes, Aguascalientes, M\'exico\\
\ttfamily {jvilla@correo.uaa.mx} }
\date{}
\maketitle

\begin{abstract}
In this paper we will see that the global or local existence of solutions to
\begin{eqnarray*}
\dfrac{\partial u_{1}}{\partial t} & = & \mathit{k}_{1} (t) \Delta u_{1} + h_{1}(t) u_{1}^{p_{11}} u_{2}^{p_{12}},\\
\dfrac{\partial u_{2}}{\partial t} & = & \mathit{k}_{2} (t) \Delta u_{2} + h_{2}(t) u_{2}^{p_{22}} u_{1}^{p_{21}},
\end{eqnarray*}
depends on the initial datums and the global or local existence of solutions to 
\begin{eqnarray*}
\dfrac{dy_{1}}{dt} & = & h_{1}(t) y_{1}^{p_{11}}(t) y_{2}^{p_{12}}(t),\\
\dfrac{dy_{2}}{dt} & = & h_{2}(t) y_{2}^{p_{22}}(t) y_{1}^{p_{21}}(t).
\end{eqnarray*}
We also give some bounds for the maximal existence time of the partial differential system. Moreover, if such existence time is finite and $\min\{p_{11} + p_{12},p_{22}+p_{21}\} > 1$ then we will prove the partial differential system has solutions that blows-up at space infinite.

\vspace{0.5cm} \textbf{Mathematics Subject Classification (2010).} Primary
35K57, 35K45; Secondary 35B40, 35K20.

\textbf{Keywords.} Blow-up at space infinity, Osgood's criteria, comparison theorem, non-autonomous coupled heat equations.
\end{abstract}

\section{Introduction and statement of main results}\label{SecPre}

Let $i \in \lbrace 1, 2 \rbrace $ and $j = 3-i$. We shall consider positive solutions of the following non-linear reaction-diffusion system
\begin{eqnarray}\label{eedif}
\dfrac{\partial u_{i}}{\partial t} (t,x) &=& \mathit{k}_{i} (t) \Delta u_{i} (t,x) + h_{i} (t) u_{i}^{p_{ii}} (t,x)  u_{j}^{p_{ij}} (t,x), \;\;\; t>0, \, x \in \mathbb{R}^{n}, \\
u_{i} (0,x) &=& \varphi_{i} (x), \;\;\; x \in \mathbb{R}^{n}, \nonumber
\end{eqnarray}
where $ \varphi_{i} : \mathbb{R}^{n} \rightarrow \mathbb{R} $ are continuous functions, non-negative and bounded, $ h_{i}, \, k_{i}: [0,\infty) \rightarrow (0,\infty)$ are continuous functions and $p_{ij}$ are non-negative real numbers, $i,j\in\{1,2\}$.

Let us introduce the following convention. The numbers $i$ and $j$ are dummy variables in the sense that if we define (or obtain) an expression for $i$, then we get a similar expression for $j$ changing only the roles of the indexes.

If $f$ is a real-valued function in the variables $(t,x)$, for $t$ fixed we will denote by $f(t,\cdot)$ the function $x\mapsto f(t,x)$, or briefly $f(t)=f(t,\cdot)$. Let $B(\mathbb{R}^{n})$ be the space of real-valued bounded measurable functions defined on $\mathbb{R}^{n}$. Let us consider the family $\{T_{t}, t\geq 0\}$ of bounded linear operators defined, on $B(\mathbb{R}^{n})$, as
\begin{equation*} 
T_{t} (g)(x) = (p(t,\cdot)*g)(x):=\int_{\mathbb{R}^{n}}p(t,x-y)g(y)dy,
\end{equation*}
where
\begin{equation*}
p (t,x) = \dfrac{1}{(4 \pi t)^{n/2}} \exp \left( - \dfrac{ \Vert x \Vert^{2}}{4t} \right),
\end{equation*}
is the Gaussian density. It is well known that $\{T_{t}, t\geq 0\}$ is a strongly continuous semigroup. With this notation, the corresponding integral system to (\ref{eedif}) is
\begin{equation} \label{eeint}
u_{i} (t,x) = U_{i}(0,t)(\varphi_{i})(x) +\int_{0}^{t} h_{i}(s) U_{i}(s,t) (u_{i}^{p_{ii}} (s) u_{j}^{p_{ij}} (s))(x)ds,
\end{equation} 
where $U_{i}(s,t)=T_{K_{i}(s,t)}$ and 
\begin{equation*}
K_{i} (s,t) := \int_{s}^{t} \mathit{k}_{i}(r)dr, 
\end{equation*}
in particular, we denote $K_{i} (0,t)$ by $K_{i} (t)$. A solution of (\ref{eeint}) is called mild solution of (\ref{eedif}), see \cite{Pazy}. The fact that (\ref{eedif}) is not an autonomous equation implies the family $\{U_{i}(s,t), 0\leq s<t\}$ is an evolution system (when $k_{i}\equiv 1$ then $U_{i}$ is a semigroup), see \cite{Pazy}. When we study the path behavior of mild solutions we usually use the semigroup property, but now we have an evolution system. This generality leads to some new difficulties that we are going to analysis in this paper.

We introduce a little more notation. Generically, by $[0,\tau_{(\cdot)})$ we will denote the maximal interval of existence of solution of the system $(\cdot)$. We say that (\ref{eedif}) has a global solution if $\tau_{(\ref{eedif})}=\infty$. On the contrary, if $\tau_{(\ref{eedif})}<\infty$ then (see Theorem 6.2.2 in \cite{Pazy})
\begin{equation}\label{liinfc}
\limsup_{t\uparrow\tau_{(\ref{eedif})}}\ \sup_{x\in\mathbb{R}^{n}}\{|u_{1}(t,x)|+|u_{2}(t,x)|\}=\infty
\end{equation}
and we will say the solution of (\ref{eedif}) blows-up in a finite time. Since we will study explosion in finite time we only need to verify that our system of equations of interest (\ref{eedif}) only has a local solution, in principle we do not need to show that it has a global solution. In fact, we prove, in Theorem \ref{extlocint}, that (\ref{eeint}) has a local solution, from this we can deduce that the system (\ref{eedif}) has a local solution.

Roughly speaking the diffusion term, in the system of equations (\ref{eedif}), just dispersed the mass of the system and the reaction term gives to the system a drift. Under certain hypotheses, we will see that to decide if $\tau_{(\ref{eedif})}$ is finite or not the diffusion term is not as important as the reaction term. To make precise this intuitive fact, let us consider the solution $(y_{1},y_{2})$ of the system
\begin{align} \label{edoa}
\frac{dy_{i}}{dt} (t) = & \ h_{i}(t)y_{i}^{p_{ii}}(t)y_{j}^{p_{ij}}(t), \ \ t>0,\\
y_{i}(0) = & \ ||\varphi_{i}||_{u},\nonumber
\end{align}
where $||f||_{u}=\sup\{|f(x)|:x\in \mathbb{R}^{n}\}$ is the uniform norm. 

\begin{theorem} \label{tdte}
For each $ i,j \in \left\lbrace 1,2 \right\rbrace $, let $ \varphi_{i} $ be a non-negative bounded continuous functions defined on $\mathbb{R}^{n}$ satisfying 
\begin{equation}
\lim \limits_{||x|| \rightarrow \infty} \varphi_{i} (x) = \Vert \varphi_{i} \Vert_{u}>0, \label{condleninf}
\end{equation}
$ h_{i}, \, k_{i}: [0,\infty) \rightarrow (0,\infty)$ are continuous functions and the constants $p_{ij}\geq 0$. Then $\tau_{(\ref{edoa})}=\tau_{(\ref{eedif})}$ and moreover we have
\begin{equation}\label{eqparalim}
\lim \limits_{||x|| \rightarrow \infty} u_{i} (t,x) = y_{i}(t),
\end{equation}
The above convergence is uniform on compact subset of $ [0,\tau_{(\ref{edoa})}) $.
\end{theorem}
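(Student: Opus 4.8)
The plan is to trap $u_i$ between the spatially homogeneous supersolution $(y_1,y_2)$ from above and the Picard approximations of (\ref{eeint}) from below, and to pass to the limit $\|x\|\to\infty$ in these bounds; the equality $\tau_{(\ref{edoa})}=\tau_{(\ref{eedif})}$ then follows from the resulting squeeze together with the blow-up alternative (\ref{liinfc}). For the comparison, note that each $U_i(s,t)=T_{K_i(s,t)}$ fixes constant functions and the reaction terms $(v_1,v_2)\mapsto v_i^{p_{ii}}v_j^{p_{ij}}$ are non-decreasing in $v_1$ and in $v_2$ (the system is cooperative), so the space-independent pair $(y_1(t),y_2(t))$ is a supersolution of (\ref{eeint}) with datum $\|\varphi_i\|_u\ge\varphi_i$ and $(0,0)$ is a subsolution; hence the monotone iterates
\[
u_i^{(0)}(t,x)=U_i(0,t)(\varphi_i)(x),\qquad u_i^{(k+1)}(t,x)=u_i^{(0)}(t,x)+\int_0^t h_i(s)\,U_i(s,t)\!\big((u_i^{(k)})^{p_{ii}}(u_j^{(k)})^{p_{ij}}(s)\big)(x)\,ds
\]
are non-decreasing in $k$, satisfy $0\le u_i^{(k)}\le y_i^{(k)}$ (the $k$-th Picard iterate of (\ref{edoa})), and converge, as in Theorem \ref{extlocint}, to the mild solution; thus $0\le u_i^{(k)}(t,x)\le u_i(t,x)\le y_i(t)$ for $0\le t<\min\{\tau_{(\ref{eedif})},\tau_{(\ref{edoa})}\}$. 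In particular $u_i$ stays bounded on $[0,T]\times\mathbb{R}^{n}$ whenever $T<\tau_{(\ref{edoa})}$, so (\ref{liinfc}) forces $\tau_{(\ref{eedif})}\ge\tau_{(\ref{edoa})}$.

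\emph{Behaviour of the iterates at space infinity.} Fix $T<\tau_{(\ref{edoa})}$; on $[0,T]$ one has $\|\varphi_i\|_u\le y_i^{(k)}\uparrow y_i$, uniformly by Dini's theorem. I will prove by induction on $k$ that
\[
\lim_{\|x\|\to\infty}u_i^{(k)}(t,x)=y_i^{(k)}(t)\qquad\text{uniformly for }t\in[0,T].
\]
For $k=0$ this follows from $u_i^{(0)}(t,x)=\|\varphi_i\|_u-U_i(0,t)(\|\varphi_i\|_u-\varphi_i)(x)$ together with $\sup_{0\le t\le T}U_i(0,t)(\|\varphi_i\|_u-\varphi_i)(x)\to0$ as $\|x\|\to\infty$: given $\varepsilon>0$, by (\ref{condleninf}) there is $R$ with $0\le\|\varphi_i\|_u-\varphi_i\le\varepsilon$ off the ball $B_R$, while the part of the Gaussian integral over $B_R$ is at most $\|\varphi_i\|_u$ times the mass of a fixed ball seen from a far-away point, which is arbitrarily small uniformly for $0\le K_i(t)\le K_i(T)$. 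For the inductive step, by the inductive hypothesis and the uniform continuity of $(a,b)\mapsto a^{p_{ii}}b^{p_{ij}}$ on $[0,Y]^2$ (with $Y:=\max_i\sup_{[0,T]}y_i$), the functions $g_s:=(u_i^{(k)})^{p_{ii}}(u_j^{(k)})^{p_{ij}}(s)$ tend, as $\|y\|\to\infty$, to the constant $G_s:=(y_i^{(k)})^{p_{ii}}(y_j^{(k)})^{p_{ij}}(s)$, uniformly in $s\in[0,T]$, and $0\le g_s\le Y^{p_{ii}+p_{ij}}$; writing $U_i(s,t)(g_s)(x)=G_s+U_i(s,t)(g_s-G_s)(x)$ and estimating the last term by splitting its kernel integral into a fixed ball and its complement exactly as in the case $k=0$ yields $U_i(s,t)(g_s)(x)\to G_s$ uniformly in $0\le s\le t\le T$; integrating in $s$ gives $u_i^{(k+1)}(t,x)\to y_i^{(k+1)}(t)$ uniformly on $[0,T]$.

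\emph{Conclusion and equality of existence times.} From the two preceding paragraphs, for $t\in[0,T]$,
\[
0\le y_i(t)-u_i(t,x)\le\big(y_i(t)-y_i^{(k)}(t)\big)+\big(y_i^{(k)}(t)-u_i^{(k)}(t,x)\big);
\]
given $\eta>0$, first choose $k$ (by Dini) so the first term is $<\eta/2$ on $[0,T]$, then $\rho$ so the second is $<\eta/2$ for $\|x\|\ge\rho$, $t\in[0,T]$, whence $\sup_{t\in[0,T]}\sup_{\|x\|\ge\rho}|u_i(t,x)-y_i(t)|\le\eta$. This is (\ref{eqparalim}), uniformly on compact subsets of $[0,\tau_{(\ref{edoa})})$. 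Finally, suppose $\tau_{(\ref{edoa})}<\infty$; the $y_i$ are non-decreasing with $y_i(0)=\|\varphi_i\|_u>0$, and by the continuation principle for the system (\ref{edoa}) one has $y_1(t)+y_2(t)\to\infty$ as $t\uparrow\tau_{(\ref{edoa})}$. By (\ref{eqparalim}), $\|u_1(t)\|_u+\|u_2(t)\|_u\ge y_1(t)+y_2(t)-1\to\infty$ as $t\uparrow\tau_{(\ref{edoa})}$, which is incompatible with the boundedness of the mild solution on compact subintervals of $[0,\tau_{(\ref{eedif})})$ unless $\tau_{(\ref{eedif})}\le\tau_{(\ref{edoa})}$; combined with the comparison step, $\tau_{(\ref{eedif})}=\tau_{(\ref{edoa})}$.

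\emph{Main obstacle.} The real work is the induction in the second paragraph: the assertion ``the heat evolution of a function approaching a limit at space infinity still approaches that limit'' is intuitively clear but must be made quantitative and, crucially, \emph{uniform in time}, which is delicate because of the non-autonomy — one must control the variance $K_i(s,t)$ uniformly over the triangle $0\le s\le t\le T$, using that $k_i$ is continuous and positive so $K_i(s,t)\le K_i(T)<\infty$, and the evolution-system (rather than semigroup) structure means the base estimate has to be re-run inside the $s$-integral. A secondary issue is justifying the monotone iteration, i.e. a comparison principle for the coupled integral system (\ref{eeint}), which rests on the cooperativity of the reaction (monotonicity of $v_i^{p_{ii}}v_j^{p_{ij}}$ in both arguments) and on $\|\varphi_i\|_u>0$ in (\ref{condleninf}), which keeps the limiting ODE non-degenerate.
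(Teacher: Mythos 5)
Your proposal is correct in substance but follows a genuinely different route from the paper. The paper works directly with the mild formulation (\ref{ecintexp}): it takes $\liminf_{\|x\|\to\infty}$ of both sides (via Fatou's lemma and the unit mass of the Gaussian kernels) and also the uniform norm of both sides, obtaining the two integral inequalities (\ref{ecpellim}) and (\ref{ecpnorma}), and then feeds both into the comparison Theorem \ref{lcpo}, which in one stroke yields $\|u_i(t)\|_u\le y_i(t)\le\liminf_{\|x\|\to\infty}u_i(t,x)$; this sandwich gives (\ref{eqparalim}) and $\tau_{(\ref{edoa})}=\tau_{(\ref{eedif})}$ simultaneously. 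You avoid Theorem \ref{lcpo} altogether: you trap $u_i$ between monotone Picard iterates and $y_i$, compute the spatial limit of each iterate by explicit Gaussian tail estimates uniform over the variances $K_i(s,t)\le K_i(T)$, and close with Dini's theorem. What your route buys is an explicit proof of the uniformity in $t$ on compact time intervals (the paper asserts this in the statement but its proof only argues pointwise in $t$); the price is the iteration bookkeeping. Two details deserve care. First, the only genuinely global input you need is $u_i\le y_i$ on $[0,\min\{\tau_{(\ref{edoa})},\tau_{(\ref{eedif})}\})$; you justify it by saying the iterates converge to the mild solution ``as in Theorem \ref{extlocint}'', but the contraction there is only local in time, so you must either continue that identification by restarting at later times or, more cheaply, take uniform norms in (\ref{eeint}) as the paper does (your inequality $u_i^{(k)}\le u_i$ needs no such identification, so this is the single place to patch). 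Second, Dini's theorem requires identifying the monotone limit of the ODE Picard iterates with $y_i$; this is legitimate because all iterates stay above $\|\varphi_i\|_u>0$, where the nonlinearity is locally Lipschitz — exactly where hypothesis (\ref{condleninf}) enters, as you note. Neither point is a fatal gap; both are standard completions.
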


The previous result means that the condition (\ref{condleninf}) together the system of equations (\ref{edoa}) determine completely the existence of global or local solutions to the system (\ref{eedif}). Such remarkable theorem was first proved in \cite{G-U} for the equation
\begin{eqnarray}
\dfrac{\partial u}{\partial t}(t,x) & = & \Delta u(t,x) + u^{p}(t,x), \ x\in \mathbb{R}^{n}, \ t>0,\label{edppot}\\
u(0,x)&=&u_{0}(x), \ x\in \mathbb{R}^{n}, \nonumber
\end{eqnarray}
where $p>1$ and the initial datum $u_{0}$ is a non-negative continuous function defined on $\mathbb{R}^{n}$ satisfying
\begin{equation}\label{condpexlim}
\lim_{||x||\rightarrow\infty}u_{0}(x)=M>0, \ \ u_{0}\leq M \ \ \text{and} \ \ u_{0}\not\equiv M.
\end{equation}
Now we omit the existence of some constant $M$, this is because it is not difficult to see that condition (\ref{condpexlim}) is equivalent to $M=||u_{0}||_{u}$. Moreover, we present a direct proof of Theorem \ref{tdte} based on some simple properties of $\liminf$. On the other hand, if we relax the assumption (\ref{condleninf}) then (\ref{eqparalim}) is not necessary true. In fact, Shimojo relaxed the condition (\ref{condpexlim}) but under such new condition, see \cite{S}, the limit $\lim_{||x|| \rightarrow \infty} u(t,x)$ is not necessary the solution to 
\begin{eqnarray}
\cfrac{dy}{dt}(t) & = & y^{p}(t), \ t>0, \label{edopot}\\
y(0)&=&M.\nonumber
\end{eqnarray}
The study of the global profile for the systems like (\ref{eedif}) is a recent topic of great interest (see for example \cite{V-A}, \cite{G-U}, \cite{J-B}, \cite{S}, \cite{S-U} and the references therein).
    
We will say that a solution of the system $(\ref{eedif})$ blows-up in finite time at space infinity if the maximal existence time $\tau_{(\ref{eedif})}<\infty$ and for all $R>0$,
\begin{equation}\label{exptfenifn}
\limsup_{t\uparrow \tau_{(\ref{eedif})}}\sup_{||x||\leq R}\{|u_{1}(t,x)|+|u_{2}(t,x)|\}<\infty.
\end{equation}
The limit (\ref{exptfenifn}) implies that for $t$ close enough to $\tau_{(\ref{eedif})}$ the functions $u_{1}(t)$ or $u_{2}(t)$ are  bounded in any closed ball, but on the other hand from (\ref{liinfc}) we can deduce that $||u_{1}(\tau_{(\ref{eedif})},\cdot)||_{u}=\infty$ or $||u_{2}(\tau_{(\ref{eedif})},\cdot)||_{u}=\infty$, so $u_{1}(t)$ or $u_{2}(t)$ are unbounded (or ``infinite'') just at infinity ($R\approx \infty$).

\begin{theorem}\label{Thbenalin}
Let us assume the hypotheses of Theorem \ref{tdte} and
\begin{equation}
\min\{p_{11} + p_{12},p_{22}+p_{21}\} > 1.\label{relentre}
\end{equation}
If $\tau_{(\ref{eedif})}<\infty$, then the solution of the system (\ref{eedif}) blows-up in finite time at space infinity.
\end{theorem}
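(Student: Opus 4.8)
Assume $\tau_{(\ref{eedif})}<\infty$. By Theorem \ref{tdte}, $\tau_{(\ref{edoa})}=\tau_{(\ref{eedif})}=:\tau$; since the maximal solution $(y_1,y_2)$ of (\ref{edoa}) is non‑decreasing and cannot be extended to $\tau$, at least one $y_i(t)\to\infty$ as $t\uparrow\tau$ — any component that stays bounded makes $u_i\le y_i$ trivially bounded, so we fix an index with $y_i(t)\to\infty$. The spatially constant pair $(y_1,y_2)$ is a supersolution of (\ref{eedif}) with $y_i(0)=\|\varphi_i\|_u\ge\varphi_i$, and the system is cooperative ($p_{ij}\ge 0$ makes each reaction non‑decreasing in the other unknown), so the comparison theorem gives $0\le u_i(t,x)\le y_i(t)$ on $[0,\tau)\times\mathbb R^n$. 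By the definition of blow‑up at space infinity, it then remains only to prove that for every $R>0$, $\limsup_{t\uparrow\tau}\sup_{\|x\|\le R}u_i(t,x)<\infty$.

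\noindent The plan is to work with the rescaled unknowns $\eta_i:=u_i/y_i\in[0,1]$, which by a direct computation from (\ref{eedif}) and (\ref{edoa}) satisfy
\[
\partial_t\eta_i=k_i(t)\Delta\eta_i+a_i(t)\bigl(\eta_i^{\,p_{ii}}\eta_j^{\,p_{ij}}-\eta_i\bigr),\qquad a_i(t):=h_i(t)\,y_i(t)^{p_{ii}-1}y_j(t)^{p_{ij}}=\frac{d}{dt}\log y_i(t)\ge 0 .
\]
The decisive feature is $\int_0^\tau a_i(t)\,dt=\log y_i(\tau^-)-\log\|\varphi_i\|_u=+\infty$ (and $a_i(t)\to\infty$ as $t\uparrow\tau$): in the $\eta_i$‑equation the reaction is a \emph{damping} term of infinite total strength, so one expects $\eta_i\to 0$ on bounded sets, and fast enough that $u_i=\eta_i\,y_i$ stays bounded there. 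Condition (\ref{relentre}) enters because the fixed point of this damping is strictly contracted: on a set where $\eta_i,\eta_j\le\lambda<1$ one has $\eta_i^{\,p_{ii}}\eta_j^{\,p_{ij}}\le\lambda^{\,p_{ii}+p_{ij}}<\lambda$, so there $\eta_i$ is a subsolution of $\zeta_t=k_i\Delta\zeta+a_i(\lambda^{\,p_{ii}+p_{ij}}-\zeta)$.

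\noindent I would carry this out in three steps. \emph{(i) An a priori ball bound.} Since $\tau<\infty$, $K_i(t)=\int_0^t k_i\le K_i(\tau)<\infty$; when $p_{ii}\ge 1$ the reaction is non‑positive, so $\eta_i(t,\cdot)\le T_{K_i(t)}(\varphi_i/\|\varphi_i\|_u)=1-\|\varphi_i\|_u^{-1}T_{K_i(t)}(\|\varphi_i\|_u-\varphi_i)$, and as $\|\varphi_i\|_u-\varphi_i\ge 0$, $\not\equiv 0$, this heat potential is bounded below by a positive constant on $\overline{B_{2R}}\times[t_0,\tau)$ for any $t_0>0$; hence $\eta_i\le\lambda_R<1$ there. \emph{(ii) A contraction step.} On a ball on which $\eta_i,\eta_j\le\lambda<1$, comparing $\eta_i$ with the subsolution equation above on $B_{2R}$ — the current outer bound serving as a Dirichlet barrier on $\partial B_{2R}$ — and using the heat‑kernel representation on the ball, one bounds $\eta_i(t,x)$ for $\|x\|\le R$ by $\lambda^{\,p_{ii}+p_{ij}}$ plus an interior term $\lesssim e^{-\int_{t_0}^t a_i}$ and a boundary term $\lesssim\int_{t_0}^t e^{-\int_s^t a_i}e^{-cR^2/(t-s)}\,ds$; splitting the latter at $t-s\sim\sqrt{\tau-t}$ makes both residues $o\bigl(1/y_i(t)\bigr)$, since $e^{-cR^2/(t-s)}$ suppresses the recent times while $e^{-\int_s^t a_i}$ (with $a_i\to\infty$) suppresses the rest. \emph{(iii) Iteration.} Because $p_{ii}+p_{ij}>1$, the thresholds $\lambda\mapsto\lambda^{\,p_{ii}+p_{ij}}$ contract super‑exponentially to $0$, and because $\int_0^\tau a_i=\infty$ one can fit the (geometrically growing) time budgets the successive steps need inside $[t_0,\tau)$; after finitely many steps one reaches $\eta_i(t,x)\le C_R/y_i(t)$ on $\overline{B_R}$, i.e.\ $u_i(t,x)\le C_R$ there, which is the required bound.

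\noindent I expect the last step to be the main obstacle: making the nested‑ball and time‑budget bookkeeping precise, and controlling the boundary terms in the localized comparisons (the Dirichlet heat kernel on $B_{2R}$ and its Poisson part). A second technical point is the a priori bound (i) when some exponent $p_{ii}<1$ — there $\eta_i^{\,p_{ii}-1}$ blows up as $\eta_i\to 0$, and one must carry the pair $(\eta_1,\eta_2)$ jointly, using $p_{ii}+p_{ij}>1$ on the product $\eta_i^{\,p_{ii}}\eta_j^{\,p_{ij}}$ rather than on $\eta_i$ alone. Alternatively one may quote and adapt the scalar result of \cite{G-U}, which is exactly this mechanism for a single equation.
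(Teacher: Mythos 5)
Your reduction to the rescaled unknowns $\eta_i=u_i/y_i$ and the observation that $a_i=\frac{d}{dt}\log y_i$ has divergent integral are correct and capture the right mechanism, but the quantitative heart of your plan does not hold as stated, and this is a genuine gap rather than mere bookkeeping. In step (ii) you claim that splitting the boundary term at $t-s\sim\sqrt{\tau-t}$ makes the residues $o\bigl(1/y_i(t)\bigr)$. The recent-time part is indeed negligible (it is $O\bigl(e^{-cR^2/\sqrt{\tau-t}}\bigr)$, and $y_i$ blows up at most polynomially since $\min\{p_{11}+p_{12},p_{22}+p_{21}\}>1$ gives $y_1+y_2\leq C(\tau-t)^{-1/(q-1)}$), but the far-time part of the boundary contribution carries the factor $e^{-\int_s^t a_i}=y_i(s)/y_i(t)$ with $s\leq t-\sqrt{\tau-t}$, i.e.\ it is of order $y_i(t-\sqrt{\tau-t})/y_i(t)\sim(\tau-t)^{a/2}$ when $y_i(t)\sim(\tau-t)^{-a}$; this is $o(1)$ but far larger than $1/y_i(t)\sim(\tau-t)^{a}$. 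Consequently one comparison step only yields $\eta_i\leq\lambda^{p_{ii}+p_{ij}}+o(1)$, not $\lambda^{p_{ii}+p_{ij}}+o(1/y_i)$, and step (iii) collapses: finitely many contractions give $\eta_i\leq\lambda_k+o(1)$ with $\lambda_k>0$ a constant, hence $u_i\leq\lambda_k\,y_i(t)+o(y_i(t))$, which is still unbounded on the ball. To reach $u_i\leq C_R$ you would need infinitely many steps with the coefficient decay matched quantitatively against the growth of $y_i$ and against the accumulating boundary errors — exactly the part you defer. A secondary gap is your a priori bound (i): it is proved only for $p_{ii}\geq1$, while the theorem assumes only $p_{ii}+p_{ij}>1$; your remark that one must ``carry the pair jointly'' is the right instinct but is not carried out.

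The paper closes both gaps with two cleaner devices that you may want to compare with. First, instead of treating each $\eta_i$ separately, it builds one auxiliary caloric function $w$ on a ball ($\partial_t w=(k_1+k_2)\Delta w$, $w=1$ on the lateral boundary, $\Delta w(0,\cdot)\geq0$, $\max_i u_i(0,\cdot)/y_i(0)\leq w(0,\cdot)\leq1$, $w(0,\cdot)\not\equiv1$) and checks that $(y_1w,y_2w)$ is a supersolution of (\ref{eedif}); this uses only $w\leq1$, $\Delta w\geq0$ and $p_{ii}+p_{ij}>1$, so it handles both components and all admissible exponents at once, and the strong maximum principle then gives $u_i\leq\theta y_i$ with a single $\theta<1$ on interior compact sets. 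Second, and decisively, instead of iterating a contraction it upgrades $u_i\leq\theta y_i(t)$ in one stroke via the time-bent supersolution $z_i(t,x)=\theta\,y_i\bigl(t-\varepsilon\cos^2(\pi|x-x_0|/2\rho_0)\bigr)$ (Lemma \ref{respreetf}): the strict slack $1-\theta^{p_{ii}+p_{ij}-1}>0$ — this is precisely where (\ref{relentre}) enters — absorbs the spatial error terms of the bent profile for small $\varepsilon$, and comparison yields $u_i\leq\theta\,y_i(\tau_{(\ref{eedif})}-\varepsilon/2)$ on a half-ball, which is a finite bound; a covering argument over $\bar B(0,R)$ then gives (\ref{exptfenifn}). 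If you want to salvage your route, the honest way is to replace your step (iii) by this time-shift comparison (or to carry out the infinite iteration with explicit rate matching), and to replace step (i) by a joint supersolution of the $w$-type; as written, the proposal does not yet prove the theorem.
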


If, for each $i\in \{1,2\}$, we set $u_{i}(\tau_{(\ref{eedif})}-):=\limsup_{t\uparrow \tau_{(\ref{eedif})}}u_{i}(t)$, from the proof of Theorem \ref{Thbenalin} we will see that $|u_{i}(\tau_{(\ref{eedif})}-,x)|<\infty$, for each $x\in\mathbb{R}^{n}$ and $||u_{1}(\tau_{(\ref{eedif})}-)||_{u}+||u_{2}(\tau_{(\ref{eedif})}-)||_{u}=\infty$, this clarifies the name of blow-up in finite time at space infinity.

The condition (\ref{relentre}) could be interpreted as a strong cooperative relationship in the system in order to have this kind of explosion. A similar autonomous system is studied in \cite{S-U}, but now we consider the non-autonomous one and we observe that the time dependence, through $k$ and $h$, does not affect the path behavior of the solution. This is intuitively clear because the explosion in time is finite, and in this case the contribution of $k$ and $h$ is bounded, since they are continuous functions.

The explosion time of equation (\ref{edopot}) is $t_{e}=(p-1)^{-1}M^{1-p}$, $p>1$. Using this, the authors in \cite{G-U} proved that the maximal time of existence of (\ref{edppot}) is $t_{e}$. In our case, the system (\ref{edoa}) does not have an explicit expression. Therefore, one of our main contributions is to give bounds for the maximum time of existence of the system (\ref{eedif}). 
To state our next result we need to introduce some new notation. By $\tau_{(\cdot),i}$ we mean the maximal existence time of the $i$-th component of the system $(\cdot)$, then $\tau_{(\cdot)}=\min\{\tau_{(\cdot),1},\tau_{(\cdot),2}\}$.  We also set
\begin{equation*}
a_{i} = p_{ji} -p_{ii} +1, \ \ \ \alpha_{i} = p_{ii} +p_{ij} \dfrac{a_{i}}{a_{j}},
\end{equation*}
and
\begin{equation*}
\tilde{h}_{i}(t) = \dfrac{h_{i}(t)}{h_{j}(t)}.
\end{equation*}
In what follows, we denote by $c_{(\cdot)}$ a positive constant related to the inequality (or expression) indicated in $(\cdot)$. If $f:\mathbb{R} \rightarrow \mathbb{R}$ is monotone by $f(\infty)$ we denote $\lim_{x\rightarrow\infty}f(x)$. 

\begin{theorem}\label{thest}
Let us assume that $p_{ij}\geq 0$, $ h_{i}, \, k_{i}: [0,\infty) \rightarrow (0,\infty)$ are continuous functions and $(\tilde{h}_{i})'\leq 0$, for each $i,j\in \{1,2\}$. 
\begin{itemize}
\item[{$(a)$}] Suppose $ a_{i} > 0 $ and $ a_{j} \neq 0 $.
\begin{itemize}
\item[$(a.1)$] If $ \alpha_{j} > 1 $ and $ F_{1,j} (\infty) > y_{j}^{1-\alpha_{j}}(0)(\alpha_{j}-1)^{-1}$, then
\begin{equation*}
\tau_{(\ref{edoa}),j} \leq F_{1,j}^{-1} \left( \dfrac{y_{j}^{1-\alpha_{j}}(0)}{\alpha_{j}-1} \right),
\end{equation*}
where
\begin{equation*}
F_{1,j}(x) = c_{(\ref{epi})}^{p_{ji}} \int_{0}^{x} h_{j}(t) (\tilde{h}_{i}(t))^{p_{ji}/a_{i}}dt.
\end{equation*}
\item[$(a.2)$] If $ \alpha_{i} \leq 1 $ or $[ \alpha_{i} > 1 \ \text{and} \ F_{1,i} (\infty) \leq y_{i}^{1-\alpha_{i}} (0) / (\alpha_{i}-1) ] $, then $ \tau_{(\ref{edoa}),i} = \infty$, where
\begin{equation*}
F_{1,i}(x) = c_{(\ref{epi})}^{p_{ii}-\alpha_{i}} \int_{0}^{x} h_{i}(t) (\tilde{h}_{i}(t))^{-p_{ij}/a_{j}}dt.
\end{equation*}
\end{itemize}
\item[$(b)$] Suppose $ a_{i} > 0 $ and $ a_{j} = 0 $.
\begin{itemize}
\item[$(b.1)$] If $ y_{j} (0) > 1 $ and $ F_{2,j} (\infty) > B_{2,j} (\infty) $, then $\tau_{(\ref{edoa}),j} \leq F_{2,j}^{-1} (B_{2,j} (\infty) ) $, where
\begin{equation*}
F_{2,j}(x) =  c_{(\ref{epii})}^{p_{ji}} \int_{0}^{x} h_{j}(t) \tilde{h}_{i} ^{p_{ji}/a_{i}} (t)dt, \; \;
B_{2,j} (x) = \int_{y_{j}(0)}^{x} \dfrac{ds}{s^{p_{jj}} (\log s)^{p_{ji}/a_{i}} }.
\end{equation*}
\item[$(b.2)$] If $ \tilde{h}_{i} (t) \equiv \tilde{c}_{i}>0 $ and $ F_{2,i} (\infty) \leq B_{2,i} (\infty) $, then $\tau_{(\ref{edoa}),i} = \infty $, where
\begin{equation*}
F_{2,i}(x) = \int_{0}^{x} h_{i}(t) dt, \; \;
B_{2,i} (x) = \int_{y_{i}(0)}^{x} \dfrac{1}{s^{p_{ii}}}\exp\left(-p_{ij}c_{(\ref{epii})}^{-1}(\tilde{c}_{i})^{-1} s^{a_{i}}\right)ds.
\end{equation*}
\end{itemize}
\item[$(c)$] Suppose $ a_{i} = 0 $, $ a_{j} = 0 $, $ \tilde{h}_{i}(t)\equiv \tilde{c}_{i} > 0$.    
\begin{itemize}
\item[$(c.1)$] If $\beta_{j}:= p_{ii} + p_{ij} \tilde{c}_{i}>1$ and $ F_{3,j}(\infty) > y_{j}^{1-\beta_{j}} (0) / (\beta_{j}-1) $, then
\begin{equation*}
\tau_{(\ref{edoa}),j}  \leq F_{3,j}^{-1} \left(\dfrac{y_{j}^{1-\beta_{j}} (0)}{\beta_{j}-1} \right),
\end{equation*}
where
\begin{equation*}
F_{3,j}(x) = \left( \dfrac{y_{i}(0)}{y_{j}^{\tilde{c}_{i}} (0)} \right)^{p_{ij}} \int_{0}^{x} h_{j}(t) dt.
\end{equation*}
\item[$(c.2)$] If $\gamma_{i}:= p_{ii} + p_{ij}(\tilde{c}_{i})^{-1}\leq 1$ or $ [ \gamma_{i} > 1 \text{ and } F_{3,i} (\infty) \leq y_{i}^{1-\gamma_{i}} (0) / (\gamma_{i}-1) ] $, then $ \tau_{(\ref{edoa}),i}  = \infty $, where
\begin{equation*}
F_{3,i}(x) = \left( \dfrac{y_{j}(0)}{y_{i}^{1 / \tilde{c}_{i}} (0)} \right)^{p_{ij}} \int_{0}^{x} h_{i}(t) dt.
\end{equation*}
\end{itemize}
\end{itemize}
\end{theorem}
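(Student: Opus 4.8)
The plan is to decouple the autonomous‑free system (\ref{edoa}) into two independent scalar equations and then to read each conclusion off an Osgood‑type integral criterion. Since $h_{i},k_{i}>0$ and $\|\varphi_{i}\|_{u}>0$, every $y_{i}$ is positive and strictly increasing on $[0,\tau_{(\ref{edoa})})$. Dividing the two equations of (\ref{edoa}) and rewriting the exponents through $a_{i}=p_{ji}-p_{ii}+1$, $a_{j}=p_{ij}-p_{jj}+1$, one obtains the exact identity $y_{i}^{a_{i}-1}y_{i}'=\tilde{h}_{i}\,y_{j}^{a_{j}-1}y_{j}'$ on $[0,\tau_{(\ref{edoa})})$; equivalently $(y_{i}^{a_{i}})'=(a_{i}/a_{j})\,\tilde{h}_{i}\,(y_{j}^{a_{j}})'$ when $a_{i},a_{j}\neq 0$, $(y_{i}^{a_{i}})'=a_{i}\,\tilde{h}_{i}\,(\log y_{j})'$ when $a_{j}=0\neq a_{i}$, and $(\log y_{i})'=\tilde{h}_{i}\,(\log y_{j})'$ when $a_{i}=a_{j}=0$. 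These three regimes are exactly cases (a), (b), (c) of the statement, and in case (c) the constancy hypothesis $\tilde{h}_{i}\equiv\tilde{c}_{i}$ is precisely what makes the last identity integrable, yielding the exact relation $y_{i}=y_{i}(0)\,y_{j}(0)^{-\tilde{c}_{i}}\,y_{j}^{\tilde{c}_{i}}$.

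Next I would integrate the identity from $0$ to $t$. In cases (a) and (b) the integral $\int_{0}^{t}\tilde{h}_{i}(s)\,g(s)\,ds$ (with $g=(y_{j}^{a_{j}})'$ or $g=(\log y_{j})'$, which keeps a fixed sign because $y_{j}$ is monotone) is not explicit, so here the hypothesis $(\tilde{h}_{i})'\le 0$ enters: on $[0,t]$ one has $\tilde{h}_{i}(t)\le\tilde{h}_{i}(s)\le\tilde{h}_{i}(0)$, and sandwiching the integral between $\tilde{h}_{i}(t)$ and $\tilde{h}_{i}(0)$ times the explicit antiderivative of $g$ produces the comparison estimates (\ref{epi}) (power type, case (a)) and (\ref{epii}) (logarithmic type, case (b)), of the schematic form $y_{i}(t)\ge c_{(\ref{epi})}(\tilde{h}_{i}(t))^{1/a_{i}}\,y_{j}^{a_{j}/a_{i}}(t)$ and $y_{i}^{a_{i}}(t)\ge c_{(\ref{epii})}\,\tilde{h}_{i}(t)\log y_{j}(t)$ respectively, up to lower‑order initial‑data terms absorbed into the constants $c_{(\ref{epi})},c_{(\ref{epii})}$ (which come out to be elementary combinations of $a_{i},a_{j}$ and $y_{i}(0),y_{j}(0)$), together with the reverse inequalities obtained by replacing $\tilde{h}_{i}(t)$ by $\tilde{h}_{i}(0)$ and by solving for $y_{j}$ in terms of $y_{i}$.

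Feeding the appropriate one of these comparisons into the equation for whichever component the item concerns reduces (\ref{edoa}) to a single scalar differential inequality — either $y'\ge g(t)\,y^{\delta}$ (for the items asserting $\tau<\infty$) or $y'\le g(t)\,y^{\delta}$ (for the items asserting $\tau=\infty$) — with $g$ continuous and positive and $\delta$ equal to the relevant one of $\alpha_{i},\alpha_{j},\beta_{j},\gamma_{i}$; here $g$ is exactly the integrand of the corresponding $F$, the factors $(\tilde{h}_{i}(t))^{\pm p/a}$ being what the sandwiching produced and the powers of $c_{(\ref{epi})},c_{(\ref{epii})}$ coming from raising the comparison to the power $p_{ji}$ or $p_{ij}$. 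Separating variables and integrating from $0$ to $t<\tau$ then closes each item: from $y'\ge g\,y^{\delta}$ one gets $F(t)\le\int_{y(0)}^{y(t)}s^{-\delta}\,ds\le B(\infty)$, where $B(\infty)=y(0)^{1-\delta}/(\delta-1)$ in the pure‑power case, so $\tau\le F^{-1}(B(\infty))$ as soon as $F(\infty)>B(\infty)$; from $y'\le g\,y^{\delta}$ one gets $\int_{y(0)}^{y(t)}s^{-\delta}\,ds\le F(t)\le F(\infty)$, and $y$ cannot reach $\infty$ in finite time whenever $\delta\le 1$ (the improper integral diverges) or $\delta>1$ with $F(\infty)\le y(0)^{1-\delta}/(\delta-1)$, i.e. the finite ``time budget'' never exhausts the finite mass $\int_{y(0)}^{\infty}s^{-\delta}\,ds$, whence $\tau=\infty$. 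This is the Osgood dichotomy, and it matches the hypotheses and conclusions of (a.1)--(c.2) term by term.

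The conceptual skeleton is short; the main obstacle is the case analysis hidden in the second step. One must track the signs of $a_{i}$ and $a_{j}$ — inverting $y_{j}^{a_{j}/a_{i}}\le(\text{const})\,y_{i}$ to bound $y_{j}$ by a power of $y_{i}$ reverses the inequality when $a_{j}<0$, which is why (a) is separated from the degenerate strata (b) ($a_{j}=0$) and (c) ($a_{i}=a_{j}=0$) — keep straight that $\tilde{h}_{j}=1/\tilde{h}_{i}$ is increasing while $\tilde{h}_{i}$ is decreasing so that the sandwiching points the right way, check the sign of $(y_{j}^{a_{j}})'$ and $(\log y_{j})'$, and finally verify that the constants and exponents emerging from the integration are exactly $c_{(\ref{epi})},c_{(\ref{epii})},\alpha_{i},\alpha_{j},\beta_{j},\gamma_{i}$ and the integrands of $F_{1,i},F_{1,j},F_{2,i},F_{2,j},F_{3,i},F_{3,j}$. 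Once the correct comparison is in hand in a given stratum, each of the six statements is a one‑line application of the scalar criterion above.
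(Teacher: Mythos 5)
Your proposal is correct in substance, and its second half --- feeding the comparison estimates into the equation for one component to get a scalar differential inequality and then reading off the conclusion from the Osgood dichotomy --- is exactly the paper's argument, which runs the same reduction through the comparison Theorem \ref{lcpo} and the generalized Osgood lemma. Where you genuinely differ is in how the key estimates (\ref{epi})--(\ref{epiii}) are obtained. The paper proves them by introducing the auxiliary function $J(t)=M\,y_{i}^{a_{i}}(t)-\tilde{h}_{i}(t)\,y_{j}^{a_{j}}(t)$ (and its logarithmic analogues), showing $J'-(\tilde{h}_{i}'/\tilde{h}_{i})J\geq 0$, multiplying by the integrating factor $\exp\bigl\{-\int_{0}^{t}\tilde{h}_{i}'(s)/\tilde{h}_{i}(s)\,ds\bigr\}$, and choosing $M$ large enough that $J(0)\geq 0$, so that $c_{(\ref{epi})}$ is a negative power of $M$. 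You instead divide the two equations of (\ref{edoa}) to get the exact identity $y_{i}^{a_{i}-1}y_{i}'=\tilde{h}_{i}\,y_{j}^{a_{j}-1}y_{j}'$, integrate, and sandwich $\tilde{h}_{i}(s)$ between $\tilde{h}_{i}(t)$ and $\tilde{h}_{i}(0)$; this is valid and arguably more transparent (in case $(c)$ it even yields (\ref{epiii}) with equality), but two points you only gesture at should be written out: the absorption of the initial-data term into the constant requires a short case split (e.g.\ according to whether $y_{j}^{a_{j}}(t)\leq 2\,y_{j}^{a_{j}}(0)$, resp.\ $\log y_{j}(t)\leq 2\log y_{j}(0)$, or not), and when $a_{j}<0$ the integrated term changes sign so (\ref{epi}) comes instead from the trivial monotonicity bound $y_{i}^{a_{i}}(t)\geq y_{i}^{a_{i}}(0)\geq \mathrm{const}\cdot\tilde{h}_{i}(t)y_{j}^{a_{j}}(t)$. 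Finally, note that the inversion of (\ref{epi}) used in item $(a.2)$, namely $y_{j}\leq c_{(\ref{epi})}^{-a_{i}/a_{j}}\tilde{h}_{i}^{-1/a_{j}}y_{i}^{a_{i}/a_{j}}$, preserves the inequality only when $a_{j}>0$; you flag this sign reversal without resolving it, but the paper's own proof of $(a.2)$ performs the same inversion silently, so on that point your sketch is no weaker than the published argument.
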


In \cite{Q-R} is studied the possibility of non-simultaneous blow-up for positive solutions of the system (\ref{eedif}), when $h_{i}\equiv1, \ k_{i}\equiv 1$, $i=1,2$. Now we are able to get different conditions for non-simultaneous blow-up. For example, suppose $h_{1}\equiv 1, \ h_{2}\equiv 1$ in (\ref{eedif}) and $a_{i}>0$, $a_{j}\neq 0$. If $\alpha_{j}>1$ and $\alpha_{i}\leq 1$, then Theorem \ref{thest}$(a)$ implies  that $\tau_{(\ref{edoa}),j}<\infty$ and $\tau_{(\ref{edoa}),i}=\infty$. Moreover, such result tell us intuitively that the simultaneous blow up ``is not very common.''

In the following result, which is a generalization of the blow-up criterion given in \cite{E-L} or \cite{S-U}, we will not impose any restriction on the functions $k_{i}$ but if over $h_{i}$ (see also Proposition \ref{wecandance}). This means, as we said before, that the reaction term is the important ingredient to determine the existence of global or local solutions.
\begin{corollary}\label{Corollary}
For $i\in\{1,2\}$, let us assume $ h_{i} \equiv \tilde{c}_{i}$ and let us take $k_{i}$, $\varphi_{i}$ as in Theorem \ref{tdte}. If  $ p_{11} > 1 $,  $ p_{22} > 1 $  or  $ (p_{11}-1)(p_{22}-1) - p_{12}p_{21} < 0 $, then $\tau_{(\ref{edoa})}<\infty$.
\end{corollary}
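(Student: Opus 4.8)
The plan is to reduce everything to an elementary blow-up statement for the auxiliary ODE system. Since $h_i\equiv\tilde c_i$, the system (\ref{edoa}) is now \emph{autonomous} with strictly positive right-hand side, so $y_1,y_2$ are strictly increasing and $y_i(t)\ge y_i(0)=\|\varphi_i\|_u>0$ on $[0,\tau_{(\ref{edoa})})$. By Theorem \ref{tdte} one has $\tau_{(\ref{eedif})}=\tau_{(\ref{edoa})}$, so it suffices to show that this autonomous ODE system blows up in finite time under each of the three hypotheses. I would split the argument into two cases: (i) $p_{11}>1$ or $p_{22}>1$; and (ii) $p_{11}\le 1$ and $p_{22}\le 1$, in which case only the condition $(p_{11}-1)(p_{22}-1)-p_{12}p_{21}<0$ can be in force.

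For case (i), say $p_{11}>1$, I would simply decouple the first equation: since $y_2$ is increasing, $\frac{dy_1}{dt}=\tilde c_1 y_1^{p_{11}}y_2^{p_{12}}\ge \tilde c_1 y_2(0)^{p_{12}}y_1^{p_{11}}$ (this also covers $p_{12}=0$, where $y_2^{p_{12}}\equiv1$). Comparing with the scalar equation $v'=c\,v^{p_{11}}$, $v(0)=y_1(0)$, $c=\tilde c_1 y_2(0)^{p_{12}}>0$, whose solution blows up at time $\big(c(p_{11}-1)\big)^{-1}y_1(0)^{1-p_{11}}<\infty$, yields $\tau_{(\ref{edoa}),1}<\infty$ and hence $\tau_{(\ref{edoa})}<\infty$. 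The subcase $p_{22}>1$ is identical after interchanging the indices.

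For case (ii) I would use a monomial change of variables. First note that $(p_{11}-1)(p_{22}-1)-p_{12}p_{21}<0$ together with $p_{11}\le1,\ p_{22}\le1$ forces $p_{12}p_{21}>(1-p_{11})(1-p_{22})\ge0$, hence $p_{12}>0$ and $p_{21}>0$. For positive exponents $a,b$ to be chosen, put $z=y_1^{a}y_2^{b}$; differentiating along the system gives
\[
\dot z = a\tilde c_1\,y_1^{a+p_{11}-1}y_2^{b+p_{12}}+b\tilde c_2\,y_1^{a+p_{21}}y_2^{b+p_{22}-1}.
\]
Applying the weighted AM--GM inequality $xA+yB\ge (x/\theta)^{\theta}(y/(1-\theta))^{1-\theta}A^{\theta}B^{1-\theta}$ (valid for $A,B\ge0$, $x,y>0$, $\theta\in(0,1)$) with $x=a\tilde c_1$, $y=b\tilde c_2$, the right-hand side is $\ge c_\theta\,y_1^{a+P_1(\theta)}y_2^{b+P_2(\theta)}$ for a constant $c_\theta>0$, where $P_1(\theta)=\theta(p_{11}-1)+(1-\theta)p_{21}$ and $P_2(\theta)=\theta p_{12}+(1-\theta)(p_{22}-1)$. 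Choosing $a=P_1(\theta)$ and $b=P_2(\theta)$ makes this exactly $c_\theta z^{2}$. The requirement $a,b>0$ rearranges to $\frac{1-p_{11}}{p_{21}}<\frac{1-\theta}{\theta}<\frac{p_{12}}{1-p_{22}}$ (with the convention that a vanishing denominator gives $+\infty$), and since $(1-\theta)/\theta$ runs bijectively over $(0,\infty)$ as $\theta$ runs over $(0,1)$, such a $\theta$ exists precisely when $(1-p_{11})(1-p_{22})<p_{12}p_{21}$, i.e.\ exactly under the hypothesis $(p_{11}-1)(p_{22}-1)-p_{12}p_{21}<0$. Fixing such $\theta$ one obtains $\dot z\ge c_\theta z^{2}$ on $[0,\tau_{(\ref{edoa})})$ with $z(0)>0$, so $z(t)\ge z(0)/(1-c_\theta z(0)t)$, which forces $\tau_{(\ref{edoa})}\le (c_\theta z(0))^{-1}<\infty$; since $y_1,y_2$ are bounded on compact subintervals whenever they exist, this means $y_1$ or $y_2$ blows up in finite time.

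I expect the only real obstacle to be case (ii): extracting finite-time blow-up from the cooperative coupling when neither diagonal exponent exceeds $1$. The key device is the monomial $z=y_1^{a}y_2^{b}$ combined with AM--GM, and the crux is the bookkeeping that identifies ``there exists $\theta\in(0,1)$ with $P_1(\theta)>0$ and $P_2(\theta)>0$'' with the sign of the determinant; the boundary subcases $p_{11}=1$ or $p_{22}=1$ need only the minor care of reading the corresponding reciprocal as $+\infty$.
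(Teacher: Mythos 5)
Your proposal is correct, but for the crucial determinant case it takes a genuinely different route from the paper. The paper handles $p_{11}>1$ or $p_{22}>1$ exactly as you do (Proposition \ref{wecandance}: freeze $y_j\ge y_j(0)$, decouple, and invoke scalar blow-up via Osgood's lemma), and then derives the case $(p_{11}-1)(p_{22}-1)-p_{12}p_{21}<0$ from Theorem \ref{thest}$(a.1)$ — which itself rests on the comparison estimate (\ref{epi}) relating $y_i$ to a power of $y_j$ and on Osgood's lemma — followed by a purely algebraic verification that $(a_{1}>0,\ \alpha_{2}>1)$ or $(a_{2}>0,\ \alpha_{1}>1)$ is equivalent to the negativity of the determinant. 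You instead build the monomial functional $z=y_{1}^{a}y_{2}^{b}$ and use weighted AM--GM to force $\dot z\ge c_{\theta}z^{2}$, with the admissibility of exponents ($P_{1}(\theta)>0$, $P_{2}(\theta)>0$ for some $\theta\in(0,1)$) encoding precisely the sign of the determinant; your bookkeeping of this equivalence, including the boundary subcases $p_{11}=1$ or $p_{22}=1$ and the deduction $p_{12},p_{21}>0$, is sound, and $z(0)>0$ because (\ref{condleninf}) gives $\|\varphi_i\|_u>0$. Your argument is more self-contained and elementary: it bypasses the comparison lemma of Section \ref{Sbounds} and Theorem \ref{thest} altogether (in particular it does not need the $a_j\neq 0$ versus $a_j=0$ case distinction, which the paper's appeal to $(a.1)$ glosses over), at the price of giving a less explicit bound on the blow-up time than the paper's $F_{1,j}^{-1}$ expression. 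Two cosmetic remarks: invoking Theorem \ref{tdte} is unnecessary, since the conclusion $\tau_{(\ref{edoa})}<\infty$ concerns the ODE system directly; and your case (i) is essentially a restatement of Proposition \ref{wecandance} specialized to constant $h_i$, so you could simply cite it.
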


\bigskip
The paper is organized as follows. Using the Banach contraction principle we prove, in Section \ref{STheLocal}, that the system (\ref{eedif}) has a local solution. The Theorem \ref{tdte} is proved in Section \ref{Smaximal} and it follows from a comparison theorem for an integral system of equations, which we consider is important in itself. In Section \ref{SBlow} we give the proof of Theorem \ref{Thbenalin} adapting some ideas introduced in \cite{S-U}. Finally, using mainly a generalized version of Osgood's lemma we prove Theorem \ref{thest} in Section \ref{Sbounds}.

\section{Local existence}\label{STheLocal}
In this section we prove the existence of local solutions to the system (\ref{eedif}). The proof is standard and we only present the main ideas. We begin with the following elementary equality, which will be essential in some steps.
\begin{lemma} \label{estpp}
Let $a,b,c,d \geq 0$ and $p,q \geq 0$, then 
\begin{equation*}
\begin{split}
a^{p}b^{q} - c^{p}d^{q} = & p(a-c)\int_{0}^{1} (c+t(a-c))^{p-1}(d+t(b-d))^{q} dt \\
& + \, q(b-d)\int_{0}^{1} (c+t(a-c))^{p}(d+t(b-d))^{q-1}dt.
\end{split}
\end{equation*}
\end{lemma}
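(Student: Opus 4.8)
The plan is to read both sides of the identity as an instance of the fundamental theorem of calculus applied to the function $\phi\colon[0,1]\to\mathbb{R}$ given by $\phi(t)=(c+t(a-c))^{p}(d+t(b-d))^{q}$. One has $\phi(0)=c^{p}d^{q}$ and $\phi(1)=a^{p}b^{q}$, so $a^{p}b^{q}-c^{p}d^{q}=\int_{0}^{1}\phi'(t)\,dt$ as soon as $\phi$ is absolutely continuous on $[0,1]$; and differentiating with the product and chain rules gives exactly
$$\phi'(t)=p(a-c)(c+t(a-c))^{p-1}(d+t(b-d))^{q}+q(b-d)(c+t(a-c))^{p}(d+t(b-d))^{q-1},$$
so integrating term by term produces the asserted formula. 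The work, therefore, is entirely in justifying this computation.

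First I would dispose of the nondegenerate case $a,b,c,d>0$. Then the linear interpolants $t\mapsto c+t(a-c)$ and $t\mapsto d+t(b-d)$ are bounded away from $0$ on $[0,1]$, hence $\phi\in C^{1}([0,1])$, the expression for $\phi'$ above is valid pointwise with no integrability issue, and the identity follows at once. Next I would pass to the general case $a,b,c,d\ge 0$ by approximation: apply the just-proved identity to $(a+\varepsilon,b+\varepsilon,c+\varepsilon,d+\varepsilon)$ and let $\varepsilon\downarrow 0$. The left-hand side converges by continuity of $(x,y)\mapsto x^{p}y^{q}$ on $[0,\infty)^{2}$ (with the convention $0^{0}=1$). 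For the right-hand side, note that $c+t(a-c)+\varepsilon$ decreases to $c+t(a-c)\ge 0$ and that this interpolant can vanish only at an endpoint $t\in\{0,1\}$; hence monotone/dominated convergence applies to each integral once one has an integrable dominating function near those endpoints — which, when $p<1$ or $q<1$, requires controlling a factor like $(c+t(a-c))^{p-1}$ by an integrable power of $t$ uniformly in small $\varepsilon$. Finally the genuinely degenerate instances $p=0$ or $q=0$ are checked directly: if, say, $p=0$ the first summand is $0$ by the prefactor $p$ (even should the integral diverge, it is multiplied by $0$, using $0\cdot(+\infty)=0$), and the identity collapses to the one-variable statement $b^{q}-d^{q}=q(b-d)\int_{0}^{1}(d+t(b-d))^{q-1}dt$.

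The only delicate point — and the sole obstacle worth flagging — is precisely the integrability of the factors $(c+t(a-c))^{p-1}$, $(d+t(b-d))^{q-1}$ when an exponent is below $1$ and the corresponding base degenerates at $t=0$ or $t=1$; this is why the extension to the boundary is routed through the $\varepsilon\downarrow 0$ limit and why the conventions $0^{0}=1$ and $0\cdot(+\infty)=0$ must be recorded. Beyond that, the proof is a direct application of the fundamental theorem of calculus.
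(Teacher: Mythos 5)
Your proof is essentially the paper's: the paper establishes the identity by applying the mean value theorem for several variables (in its integral form) to the function $(x,y)\mapsto x^{p}y^{q}$ along the segment from $(c,d)$ to $(a,b)$, which is exactly your fundamental-theorem-of-calculus computation for $\phi(t)=(c+t(a-c))^{p}(d+t(b-d))^{q}$. The only difference is that you additionally treat the degenerate situations (vanishing bases with exponents below $1$, the conventions $0^{0}=1$ and $0\cdot(+\infty)=0$) via the $\varepsilon$-approximation, a point the paper's one-line citation passes over.
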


\begin{proof}
Considering the function $ (x,y) \mapsto x^{p}y^{q} $ and using the mean value theorem for several variables (see Exercise 4.W in Section 40 of \cite{B}) we get the desired equality.\hfill
\end{proof}

\bigskip
To deal with the existence of local solutions to the system (\ref{eedif}) let us consider the space $B([0,\mathit{T}] \times \mathbb{R}^{n}) $ of real-valued bounded measurable functions defined on $ [0,\mathit{T}] \times \mathbb{R}^{n} $, for some $ \mathit{T} > 0 $, that we are going to fix later. The set $E_{\mathit{T}} = B([0,\mathit{T}] \times \mathbb{R}^{n}) \times B([0,\mathit{T}] \times \mathbb{R}^{n}) $ is a Banach space with the norm
\begin{equation*}
\opnorm{(u_{1},u_{2})}  =  \sup \limits_{t \in [0,\mathit{T}]} \left\lbrace\Vert u_{1}(t,\cdot) \Vert_{u}+\Vert u_{2}(t,\cdot) \Vert_{u}\right\rbrace.
\end{equation*}
Let $\bar{B}_{\mathit{T}} (R) = \left\lbrace (u_{1},u_{2}) \in E_{\mathit{T}} : \opnorm{(u_{1},u_{2})} \leq R \right\rbrace $ be the closed ball in $E_{T}$ with center at $ (0,0) $ and radius $ R $  and $\mathit{P}_{\mathit{T}} = \lbrace (u_{1},u_{2}) \in E_{\mathit{T}}: u_{1} \geq 0, \; u_{2} \geq 0 \rbrace$. Since $A_{T}:=\bar{B}_{T}\cap P_{T}$ is a closed subset of $E_{T}$, then $A_{T}$ is also a Banach space.
\begin{theorem} \label{extlocint}
Under the hypotheses of Theorem \ref{tdte} there exists a $T>0$ such that (\ref{eedif}) has a unique positive solution $(u_{1},u_{2})$ in $C^{1,2}([0,T]\times \mathbb{R}^{n})\times C^{1,2}([0,T]\times \mathbb{R}^{n})$. 
\end{theorem}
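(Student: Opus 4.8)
The plan is to apply the Banach contraction principle on the complete metric space $A_{T}=\bar{B}_{T}(R)\cap P_{T}$ to the integral operator $\Phi=(\Phi_{1},\Phi_{2})$ naturally associated with the system \eqref{eeint}, namely
\[
\Phi_{i}(u_{1},u_{2})(t,x)=U_{i}(0,t)(\varphi_{i})(x)+\int_{0}^{t}h_{i}(s)\,U_{i}(s,t)\big(u_{i}^{p_{ii}}(s)u_{j}^{p_{ij}}(s)\big)(x)\,ds,
\]
and then bootstrap the resulting mild solution to a classical $C^{1,2}$ solution by parabolic regularity. First I would fix $R>2(\|\varphi_{1}\|_{u}+\|\varphi_{2}\|_{u})$; then, using that each $U_{i}(s,t)=T_{K_{i}(s,t)}$ is a contraction semigroup on $B(\mathbb{R}^{n})$ preserving nonnegativity (so $\|U_{i}(s,t)g\|_{u}\le\|g\|_{u}$ and $U_{i}(s,t)g\ge0$ for $g\ge0$), and that $h_{i}$ is continuous hence bounded on $[0,T]$ by some $M_{T}$, I would estimate
\[
\|\Phi_{i}(u_{1},u_{2})(t,\cdot)\|_{u}\le\|\varphi_{i}\|_{u}+M_{T}\,T\,R^{\,p_{ii}+p_{ij}},
\]
which is $\le R/2+R/2\cdot$(small) $\le R$ once $T$ is chosen small enough depending on $R$, $M_{T}$ and the exponents; this shows $\Phi$ maps $A_{T}$ into itself, and it clearly preserves nonnegativity.

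For the contraction estimate I would invoke Lemma \ref{estpp} with $p=p_{ii}$, $q=p_{ij}$ to write, for $(u_{1},u_{2}),(v_{1},v_{2})\in A_{T}$,
\[
u_{i}^{p_{ii}}u_{j}^{p_{ij}}-v_{i}^{p_{ii}}v_{j}^{p_{ij}}=p_{ii}(u_{i}-v_{i})\!\int_{0}^{1}\!(\cdots)\,dt+p_{ij}(u_{j}-v_{j})\!\int_{0}^{1}\!(\cdots)\,dt,
\]
where each integrand is bounded pointwise by $R^{\,p_{ii}+p_{ij}-1}$ because all arguments lie in $[0,R]$. Taking uniform norms, applying $\|U_{i}(s,t)\cdot\|_{u}\le\|\cdot\|_{u}$ and integrating in $s$ gives
\[
\opnorm{\Phi(u_{1},u_{2})-\Phi(v_{1},v_{2})}\le C(R)\,M_{T}\,T\;\opnorm{(u_{1},u_{2})-(v_{1},v_{2})},
\]
with $C(R)$ depending only on $R$ and the exponents. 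Shrinking $T$ so that $C(R)M_{T}T<1$ makes $\Phi$ a strict contraction, and Banach's fixed point theorem yields a unique mild solution in $A_{T}$. (One should note the minor subtlety that $M_{T}=\sup_{[0,T]}\max_i h_i$ and the constants depend on $T$ only through this bound, so the smallness requirement on $T$ is self-consistent; this is the kind of routine point I would state but not belabor.)

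Finally, to upgrade the mild solution to a classical one, I would use that $\varphi_{i}$ is bounded and continuous and that the Gaussian semigroup smooths: $U_{i}(0,t)\varphi_{i}$ is smooth in $x$ and $C^1$ in $t$ for $t>0$, and the Duhamel term, whose integrand $h_i(s)u_i^{p_{ii}}(s)u_j^{p_{ij}}(s)$ is bounded and (by a short argument using the already-established continuity of the mild solution and continuity of the kernel) continuous in $(s,x)$, defines a function in $C^{1,2}$ by the standard parabolic Schauder/heat-potential estimates; one can iterate once to get Hölder continuity of the nonlinearity in space and then conclude $u_i\in C^{1,2}((0,T]\times\mathbb{R}^n)$, with continuity up to $t=0$ built in from the semigroup's strong continuity. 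The main obstacle, and the only place the non-autonomous nature really intervenes, is keeping careful track that the evolution family $U_{i}(s,t)=T_{K_{i}(s,t)}$ still obeys the contraction and positivity bounds uniformly for $0\le s\le t\le T$ — this is true because $K_{i}(s,t)=\int_{s}^{t}k_{i}(r)\,dr\ge0$ and $T_{\tau}$ is a contraction for every $\tau\ge0$ — and that the measurability/continuity in the time variable needed to make the Duhamel integral well defined is not lost; once that bookkeeping is in place the rest is the classical argument.
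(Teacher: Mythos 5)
Your proposal follows essentially the same route as the paper: the Banach contraction principle applied to the same Duhamel operator on the same set $A_{T}=\bar{B}_{T}(R)\cap P_{T}$, with the self-map bound coming from $\|T_{\tau}g\|_{u}\le\|g\|_{u}$ and the Lipschitz estimate coming from Lemma \ref{estpp}, followed by an upgrade of the mild solution to a classical $C^{1,2}$ solution (which the paper dispatches via properties of the convolution and dominated convergence, much as you sketch with heat-kernel smoothing). The argument and constants match in all essentials, so no further comparison is needed.
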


\begin{proof}
Define the function $ \psi : E_{\mathit{T}} \rightarrow E_{\mathit{T}} $, as
\begin{eqnarray*}
\psi (u_{1},u_{2})(t,x)& = & \left((p(K_{1} (t)) \ast\varphi_{1}) (x), (p(K_{2} (t)) \ast\varphi_{2}) (x)\right) \, \\
&&+ \left( \int_{0}^{t} h_{1}(s) (p(K_{1} (s,t))\ast u_{1}^{p_{11}} (s) u_{2}^{p_{12}} (s))(x)ds, \right. \\
&& \left. \int_{0}^{t} h_{2}(s) (p(K_{2} (s,t))\ast u_{1}^{p_{21}} (s) u_{2}^{p_{22}} (s))(x)ds \right).
\end{eqnarray*}
By hypothesis $ \varphi_{1},\varphi_{2} \geq 0 $, then (\ref{eeint}) implies $(u_{1},u_{2}) \in \mathit{P}_{\mathit{T}} $.

The Gaussian density means that for each $t>0$
\begin{equation}\label{intp}
\int_{\mathbb{R}^{n}} p(t,y) dy = 1.
\end{equation}
If $(u_{1},u_{2}) \in \bar{B}_{\mathit{T}}(R)$, then (\ref{intp}) implies
\begin{eqnarray*}
&&\left| (p(K_{i}(t)) \ast\varphi_{i})(x)+ \int_{0}^{t} h_{i}(s) (p(K_{i}(s,t)) \ast u_{i}^{p_{ii}} (s) u_{j}^{p_{ij}} (s))(x)ds \right| \\
&&\leq \int_{\mathbb{R}^{n}} p(K_{i}(t),y-x) \Vert \varphi_{i} \Vert_{u} dy \\
&& \ \ \ + \int_{0}^{t} h_{i}(s) \int_{\mathbb{R}^{n}} p(K_{i}(s,t),y-x) \Vert u_{i} (s) \Vert_{u}^{p_{ii}} \Vert u_{j} (s) \Vert_{u}^{p_{ij}} dy ds \\
&& =\Vert \varphi_{i} \Vert_{u} + \int_{0}^{t} h_{i}(s) \Vert u_{i} (s) \Vert_{u}^{p_{ii}} \Vert u_{j} (s) \Vert_{u}^{p_{ij}} ds \\
&&\leq  \Vert \varphi_{i} \Vert_{u} + \mathit{R}^{p_{ii} + p_{ij}} \int_{0}^{t} h_{i}(s) ds.
\end{eqnarray*}
From this we get
\begin{equation*}
\opnorm{\psi(u_{1},u_{2})} \leq \max \left\lbrace \Vert \varphi_{1} \Vert_{u} + \mathit{R}^{p_{11} + p_{12}} \int_{0}^{t} h_{1}(s)ds, \, \Vert \varphi_{2} \Vert_{u} + \mathit{R}^{p_{21} + p_{22}} \int_{0}^{t} h_{2}(s) ds \right\rbrace.  
\end{equation*}
Let us take
\begin{equation*}
\mathit{R} = \max \left\lbrace \Vert \varphi_{1} \Vert_{u} + 1, \Vert \varphi_{2} \Vert_{u} + 1 \right\rbrace,
\end{equation*}
and some $ \mathit{T} > 0 $ small enough such that
\begin{equation*}
\max \left\lbrace \mathit{R}^{p_{11} + p_{12}} \int_{0}^{\mathit{T}} h_{1}(s) ds, \, \mathit{R}^{p_{21} + p_{22}} \int_{0}^{\mathit{T}} h_{2}(s) ds \right\rbrace < 1.
\end{equation*}
This implies that $ \psi(u_{1},u_{2}) \in \bar{B}_{\mathit{T}}(R)$. Therefore  $\psi: A_{\mathit{T}} \rightarrow A_{\mathit{T}} $.

Now let us see that $ \psi $ is a contraction. Let us take $ (u_{1},u_{2}), (v_{1},v_{2}) \in A_{T}$.  Lemma \ref{estpp} and (\ref{intp}) turns out
\begin{eqnarray*}
&& \left| \int_{0}^{t} h_{i}(s) \int_{\mathbb{R}^{n}} p(K_{i}(s,t),y-x) u_{i}^{p_{ii}} (s,y) u_{j}^{p_{ij}} (s,y) dy ds  \right. \\
&& \ \ \ -\left. \int_{0}^{t} h_{i}(s) \int_{\mathbb{R}^{n}} p(K_{i}(s,t),y-x) v_{i}^{p_{ii}} (s,y) v_{j}^{p_{ij}} (s,y) dy ds \right| \\
&&\leq  \int_{0}^{t} h_{i}(s) \int_{\mathbb{R}^{n}} p(K_{i}(s,t),y-x) \left| u_{i}^{p_{ii}} (s,y) u_{j}^{p_{ij}} (s,y) - v_{i}^{p_{ii}} (s,y) v_{j}^{p_{ij}} (s,y) \right| dy ds \\
&&\leq  \dfrac{(3\mathit{R})^{p_{ii}+p_{ij}}}{2 \mathit{R} (p_{ii}+p_{ij})} \int_{0}^{t} h_{i}(s)(p_{ii} \Vert u_{i} (s) - v_{i} (s) \Vert_{u} + p_{ij} \Vert u_{j} (s) - v_{j} (s) \Vert_{u}) ds \\
&&\leq   \left( (3\mathit{R})^{p_{ii}+p_{ij}}\int_{0}^{t} h_{i}(s) ds \right) \opnorm{u - v}.
\end{eqnarray*}
Taking the supreme in $t$, in the above inequality, we get
\begin{eqnarray*}
&&\opnorm{\psi(u_{1},u_{2}) - \psi(v_{1},v_{2})} \\
&&\leq  \max \left\lbrace (3\mathit{R})^{p_{11}+p_{12}} \int_{0}^{\mathit{T}} h_{1}(s) ds,(3\mathit{R})^{p_{21}+p_{22}} \int_{0}^{\mathit{T}} h_{2}(s) ds \right\rbrace\opnorm{u- v}.
\end{eqnarray*}
Taking $ \mathit{T} > 0 $ small enough we see that $ \psi $ is a contraction, then the Banach contraction principle implies that there exists a unique $(u_{1},u_{2})\in A_{T}$ such that, for each $i,j \in\{1,2\}$,
\begin{equation}
u_{i}(t,x) = (p(K_{i} (t)) \ast\varphi_{i}) (x)+ \int_{0}^{t} h_{i}(s) (p(K_{i} (s,t))\ast u_{i}^{p_{ii}} (s) u_{j}^{p_{ij}} (s))(x)ds. \label{ecintexp}
\end{equation}

This means that $(u_{1},u_{2})$ is a mild local solution for the system (\ref{eedif}). From the basic properties of the convolution operator, $\ast$, and using the Lebesgue dominated convergence theorem we can see that $(u_{1},u_{2})\in C^{1,2}([0,T]\times \mathbb{R}^{n})\times C^{1,2}([0,T]\times \mathbb{R}^{n})$ and satisfies the system (\ref{eedif}).\hfill
\end{proof}

\section{A characterization of the maximal existence time}\label{Smaximal}

Remember that  $\tau_{(\ref{eedif})}$ is the maximal existence time for the systems (\ref{eedif}), the above result implies $\tau_{(\ref{eedif})}\geq T>0$.
 
\begin{theorem} \label{lcpo}
Let $ z_{1} $ and $ z_{2} $ be non-negative measurable functions defined on $[0,\tau_{(\ref{edoa})})$and $\tau \in (0,\tau_{(\ref{edoa})}]$. Suppose that, for each $i,j\in\{1,2\}$,
\begin{equation*}
z_{i} (t) \geq(\leq) \ z_{i} (0) + \int_{0}^{t} h_{i} (s) z_{i}^{p_{ii}} (s) z_{j}^{p_{ij}} (s) ds, \;\;\; \text{for all } t \in [0,\tau). 
\end{equation*}
If $ z_{i}(0)= y_{i}(0) $, then
\begin{equation} \label{udfg}
z_{i} (t) \geq (\leq) \ y_{i} (t), \;\;\; \text{for all } t \in [0,\tau).
\end{equation}
\end{theorem}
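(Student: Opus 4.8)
The plan is to carry out the supersolution case (the hypothesis with $\ge$ and conclusion $z_i\ge y_i$) in full; the subsolution case will be the mirror image, obtained by reversing every inequality and perturbing in the opposite direction. I would first record two preliminary facts. Since $p_{ij}\ge0$ and all the quantities involved are nonnegative, $y_i'=h_iy_i^{p_{ii}}y_j^{p_{ij}}\ge0$, so each $y_i$ is nondecreasing and $y_i(t)\ge y_i(0)=\|\varphi_i\|_u>0$ on its whole interval of existence. Next, I would reduce immediately to the case where $z_1,z_2$ are continuous: setting $\tilde z_i(t):=z_i(0)+\int_0^t h_i(s)z_i^{p_{ii}}(s)z_j^{p_{ij}}(s)\,ds$, this function is an indefinite integral (hence continuous), satisfies $\tilde z_i(0)=z_i(0)=y_i(0)$ and $z_i\ge\tilde z_i$, and — using that $s\mapsto s^p$ is nondecreasing on $[0,\infty)$ for $p\ge0$ and that $h_i>0$ — also satisfies $\tilde z_i(t)\ge\tilde z_i(0)+\int_0^t h_i\tilde z_i^{p_{ii}}\tilde z_j^{p_{ij}}\,ds$. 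Thus $(\tilde z_1,\tilde z_2)$ is a continuous pair meeting the same hypotheses, and since $z_i\ge\tilde z_i$ it is enough to prove $\tilde z_i\ge y_i$; so I would assume $z_i$ continuous from here on.

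Fixing $T\in(0,\tau)$, it suffices to prove $z_i\ge y_i$ on $[0,T]$ and then let $T\uparrow\tau$. For small $\varepsilon>0$ I would introduce $(y_1^\varepsilon,y_2^\varepsilon)$, the solution of the ODE system (\ref{edoa}) with perturbed initial data $y_i^\varepsilon(0)=y_i(0)-\varepsilon>0$. Because the vector field $(t,y)\mapsto\bigl(h_i(t)y_i^{p_{ii}}y_j^{p_{ij}}\bigr)_i$ is $C^1$ in $y$ on $[0,T]\times(0,\infty)^2$, a set which contains the compact graph $\{(t,y_1(t),y_2(t)):t\in[0,T]\}$, the standard continuous-dependence theorem provides $\varepsilon_0>0$ such that for $0<\varepsilon<\varepsilon_0$ the solution $y^\varepsilon$ exists on all of $[0,T]$ and $\sup_{[0,T]}\bigl(|y_1^\varepsilon-y_1|+|y_2^\varepsilon-y_2|\bigr)\to0$ as $\varepsilon\to0$; each $y_i^\varepsilon$ is again nondecreasing, hence positive.

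The core step is to show that, for each such $\varepsilon$, $z_i(t)>y_i^\varepsilon(t)$ for every $t\in[0,T]$ and $i\in\{1,2\}$. Suppose not and set $t_0:=\inf\{t\in[0,T]:z_1(t)\le y_1^\varepsilon(t)\text{ or }z_2(t)\le y_2^\varepsilon(t)\}$. Since $z_i(0)=y_i(0)>y_i^\varepsilon(0)$ and all functions are continuous, $t_0>0$, both $z_1>y_1^\varepsilon$ and $z_2>y_2^\varepsilon$ hold on $[0,t_0)$, and $z_i(t_0)=y_i^\varepsilon(t_0)$ for at least one index $i$ (with $j=3-i$). For that $i$, chaining the integral inequality for $z_i$, the equality $z_i(0)=y_i(0)$, the monotonicity of the powers applied to $z_1\ge y_1^\varepsilon$ and $z_2\ge y_2^\varepsilon$ on $[0,t_0)$, and finally the integral form of (\ref{edoa}) for $y^\varepsilon$ yields
\begin{equation*}
z_i(t_0)\ \ge\ y_i(0)+\int_0^{t_0}h_i(s)(y_i^\varepsilon)^{p_{ii}}(s)(y_j^\varepsilon)^{p_{ij}}(s)\,ds\ =\ y_i(0)+\bigl(y_i^\varepsilon(t_0)-y_i^\varepsilon(0)\bigr)\ =\ y_i^\varepsilon(t_0)+\varepsilon,
\end{equation*}
which contradicts $z_i(t_0)=y_i^\varepsilon(t_0)$. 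Hence $z_i>y_i^\varepsilon$ on $[0,T]$ for $i=1,2$; letting $\varepsilon\downarrow0$ gives $z_i\ge y_i$ on $[0,T]$, and letting $T\uparrow\tau$ finishes the supersolution case. For the subsolution case I would repeat the argument verbatim with all inequalities reversed and with $y_i^\varepsilon(0)=y_i(0)+\varepsilon$.

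I expect the main obstacle to be exactly the degeneracy that the $\varepsilon$-perturbation removes: a naive ``first time $z_i$ drops below $y_i$'' argument applied directly to the unperturbed pair does not close, because equality between the two sides of the integral relation can persist (for instance $z_i\equiv y_i$ is admissible), so strictness cannot be propagated across a contact point; opening an $\varepsilon$-gap in the initial data and invoking continuous dependence is the device that fixes this. The only other point needing care is the reduction to continuous $z_i$, which is what makes ``first contact time'' meaningful for merely measurable data, and both that reduction and the contact-time estimate rely crucially on the monotonicity of the reaction nonlinearity in each of its arguments, a consequence of $p_{ij}\ge0$.
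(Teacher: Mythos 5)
Your proof is correct, but it reaches the comparison by a genuinely different mechanism than the paper. The paper linearizes the difference of the nonlinear terms through the identity of Lemma \ref{estpp}, producing nonnegative kernels $h_{11},h_{12}$ (this is where cooperativity, $p_{ij}\geq 0$, enters there), and then runs an $\varepsilon$-shifted Gronwall-type argument: it adds $\varepsilon$ to the differences $z_{i}-y_{i}$, restricts to a short interval on which the total kernel has integral less than $1/2$, and closes with two nested hitting-time contradictions (first for $T_{i}=\sup A_{i}$, then for $S_{i}=\sup B_{i}$) before letting $\varepsilon\rightarrow 0$. You instead keep the nonlinearity intact and manufacture strict separation by lowering (resp.\ raising) the initial data of (\ref{edoa}) by $\varepsilon$ and invoking continuous dependence on initial conditions on $[0,T]$, $T<\tau$; a single first-contact argument then closes, with cooperativity entering only through the monotonicity of $x\mapsto x^{p}$, $p\geq 0$, and the contact time giving $z_{i}(t_{0})\geq y_{i}^{\varepsilon}(t_{0})+\varepsilon$, an immediate contradiction. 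What each buys: your route avoids Lemma \ref{estpp} and the double contradiction, so it is shorter and conceptually closer to the classical strict sub/supersolution trick, but it imports the continuous-dependence theorem, which requires the field to be locally Lipschitz on $(0,\infty)^{2}$ and hence the positivity $y_{i}(0)=\Vert\varphi_{i}\Vert_{u}>0$ (a fact the paper's proof also uses, via $z_{i}(s)\geq z_{i}(0)>0$ and $y_{i}(s)\geq y_{i}(0)>0$). Your preliminary replacement of the merely measurable $z_{i}$ by the continuous functions $\tilde z_{i}$ is correct and is genuinely needed for your first-contact set to be closed; the paper dispenses with any such reduction because its hitting-time arguments only ever bound $z_{i}-y_{i}$ from below by continuous integral expressions, never using pointwise continuity of $z_{i}$. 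Finally, both proofs (and your reduction in the $\leq$ case in particular) tacitly assume $h_{i}z_{i}^{p_{ii}}z_{j}^{p_{ij}}$ is locally integrable on $[0,\tau)$; this is automatic in the $\geq$ case and holds in the paper's application, where $z_{i}(t)=\Vert u_{i}(t)\Vert_{u}$ is locally bounded, so it is a shared implicit hypothesis rather than a defect of your argument.
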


\begin{proof}
We only work with the case
\begin{equation*}
z_{i} (t) \geq z_{i} (0) + \int_{0}^{t} h_{i} (s) z_{i}^{p_{ii}} (s) z_{j}^{p_{ij}} (s) ds, \; \forall t \in [0,\tau),
\end{equation*}
the other case is similar. Let us introduce the sets 
\begin{equation*}
A_{i}=\left\lbrace t \in[0,\tau): z_{i}(s) \geq y_{i} (s), \; \text{for all } s \in [0,t] \right\rbrace.
\end{equation*}
Observe that $ A_i \neq \emptyset \; (0 \in A_{i}) $, then $ \mathit{T}_{i} = \sup A_{i} \leq \tau $ is well defined. We will see that $ \mathit{T}_{i} = \tau$, $i\in \{1,2\}$. Let us proceed by contradiction, suppose $ \min\{T_{1},T_{2}\} < \tau$. Without loss of generality let us assume that $T_{1}\leq T_{2}<\tau$. Using Lemma \ref{estpp} we have, for each $ t\in [0,\tau-\mathit{T}_{1}) $,
\begin{eqnarray}
z_{1}(\mathit{T}_{1}+t) - y_{1}(\mathit{T}_{1}+t) &\geq& \int_{0}^{\mathit{T}_{1}+t} h_{1}(s) \left[ z_{1}^{p_{11}}(s) z_{2}^{p_{12}} (s) - y_{1}^{p_{11}} (s) y_{2}^{p_{12}} (s) \right] ds \nonumber\\
 &=& \int_{0}^{\mathit{T}_{1}+t} h_{11}(s) \left( z_{1}(s) - y_{1}(s) \right) ds \nonumber\\
&& + \int_{0}^{\mathit{T}_{1}+t} h_{12}(s) \left( z_{2}(s) - y_{2}(s) \right) ds,\label{eccparados}
\end{eqnarray}
where
\begin{align*}
h_{11}(s) &= p_{11}h_{1} (s) \int_{0}^{1} \left( y_{1}(s) + \xi (z_{1}(s)-y_{1}(s)) \right) ^{p_{11}-1} \left( y_{2}(s) + \xi (z_{2}(s)-y_{2}(s)) \right) ^{p_{12}} d \xi, \\ 
h_{12}(s) &= p_{12}h_{1} (s) \int_{0}^{1} \left( y_{1}(s) + \xi (z_{1}(s)-y_{1}(s)) \right) ^{p_{11}} \left( y_{2}(s) + \xi (z_{2}(s)-y_{2}(s)) \right) ^{p_{12}-1} d \xi.
\end{align*}
Notice that 
$$y_{i}(s)\geq y_{i}(0)>0, \ \ z_{i}(s)\geq z_{i}(0)>0,$$
then for each $\xi\in[0,1]$
\begin{align}
y_{1}(s) + \xi (z_{1}(s)-y_{1}(s))=\xi z_{1}(s)+(1-\xi)y_{1}(s)>0, \nonumber\\
y_{2}(s) + \xi (z_{2}(s)-y_{2}(s))=\xi z_{2}(s)+(1-\xi)y_{2}(s)>0. \label{estpapo}
\end{align}
From this we can see that $h_{11}\geq 0$ and $h_{12}\geq 0$.

Let $ \varepsilon > 0 $ and define $\psi_{i\varepsilon}: [0,\tau - \mathit{T}_{1})\rightarrow \mathbb{R}$, by
\begin{equation*}
\psi_{i\varepsilon} (t) = z_{i}(\mathit{T}_{1}+t) - y_{i}(\mathit{T}_{1}+t) + \varepsilon,
\end{equation*}
and $ k_{ij} (t) = h_{ij} (\mathit{T}_{1} + t) $.
Then (\ref{eccparados}) leads to
\begin{equation*}
\psi_{1\varepsilon} (t) \geq \int_{0}^{t} k_{11}(s) \psi_{1\varepsilon}(s) ds+ \int_{0}^{t} k_{12}(s) \psi_{2\varepsilon}(s) ds+ \varepsilon \left( 1 - \int_{0}^{t} [k_{11}(s) +k_{12}(s)]ds \right),
\end{equation*}
analogously
\begin{equation*}
\psi_{2\varepsilon} (t) \geq \int_{0}^{t} k_{22}(s) \psi_{2\varepsilon}(s) ds+ \int_{0}^{t} k_{21}(s) \psi_{1\varepsilon}(s) ds+ \varepsilon \left( 1 - \int_{0}^{t} [k_{21}(s) +k_{22}(s)]ds \right).
\end{equation*}

The hypothesis $h_{i}>0$ and (\ref{estpapo}) implies $k:=k_{11}+k_{12}+k_{21}+k_{22}>0$ on $[0,\tau-T_{1}]$, therefore $\sup\{k(t):t\in [0,(\tau-T_{1}])/2]\}\in(0,\infty)$, then there exits $\tilde{t}>0$ such that $\int_{0}^{\tilde{t}}k(s)ds<1/2$. Let us define
\begin{equation*}
\tilde{\mathit{T}}_{1} := \sup \left\lbrace t > 0: \int_{0}^{t} k(s)ds < \dfrac{1}{2} \right\rbrace \wedge  \dfrac{\tau-\mathit{T}_{1}}{2},
\end{equation*}
then $\tilde{\mathit{T}}_{1}\geq \tilde{t} \wedge  (\tau-\mathit{T}_{1})/2>0$. From the definition of $\tilde{\mathit{T}}_{1}$ we get, for each $t \in [0,\tilde{T}_{1})$,
\begin{eqnarray}
\psi_{1\varepsilon} (t) &\geq& \int_{0}^{t} k_{11}(s) \psi_{1\varepsilon}(s) ds+ \int_{0}^{t} k_{12}(s) \psi_{2\varepsilon}(s) ds+ \frac{\varepsilon}{2}, \label{mesrija}\\
\psi_{2\varepsilon} (t) &\geq& \int_{0}^{t} k_{22}(s) \psi_{2\varepsilon}(s) ds+ \int_{0}^{t} k_{21}(s) \psi_{1\varepsilon}(s) ds+ \frac{\varepsilon}{2}.\nonumber
\end{eqnarray}

Now let us introduce the set 
\begin{equation*}
B_{i}=\{t \in[0,\tilde{T}_{1}): \psi_{i\varepsilon} (s) \geq 0, \; \text{for all } s \in [0,t]\}.
\end{equation*}
The sets $B_{i}$ are not empty because $\psi_{i\varepsilon} (0) \geq \varepsilon/2$. Set $\mathit{S}_{i} = \sup B_{i} \leq \tilde{T}_{1}$. We will see that $ \mathit{S}_{i} = \tilde{T}_{1}$. Let us proceed by contradiction, $ \min\{S_{1},S_{2}\} < \tilde{T}_{1}$. Without loss of generality let us suppose $S_{1}\leq S_{2}<\tilde{T}_{1}$. The assumption $S_{1}\leq S_{2}$ implies
\begin{align*}
\lim_{t\rightarrow 0}\left\{\int_{0}^{S_{1}+t} k_{11}(s) \psi_{1\varepsilon}(s) ds+ \int_{0}^{S_{1}+t} k_{12}(s) \psi_{2\varepsilon}(s) ds+ \frac{\varepsilon}{2}\right\}\\
=\int_{0}^{S_{1}} k_{11}(s) \psi_{1\varepsilon}(s) ds+ \int_{0}^{S_{1}} k_{12}(s) \psi_{2\varepsilon}(s) ds+ \frac{\varepsilon}{2}\geq\frac{\varepsilon}{2},
\end{align*}
then there is a $\delta_{1}>0$, such that
\begin{eqnarray*}
\psi_{1\varepsilon} (S_{1}+t)&\geq&\int_{0}^{S_{1}+t} k_{11}(s) \psi_{1\varepsilon}(s) ds+ \int_{0}^{S_{1}+t} k_{12}(s) \psi_{2\varepsilon}(s) ds+ \frac{\varepsilon}{2}\\
&\geq& \frac{\varepsilon}{4}, \ \ \text{ for all } \ t\in [0,\delta_{1}].
\end{eqnarray*}
Hence $S_{1}+\delta_{1}\in B_{1}$, contradicting the definition of $S_{1}$. 

So we have seen that $\psi_{i\varepsilon} (t)\geq 0$, for each $t \in [0,\tilde{T}_{1})$. In this way, the inequality (\ref{mesrija}) yields
$$\psi_{1\varepsilon} (t) \geq \frac{\varepsilon}{2},\ \ \text{for all } t \in [0,\tilde{T}_{1}),$$
therefore
\begin{equation*}
z_{1}(\mathit{T}_{1}+t) - y_{1}(\mathit{T}_{1}+t) \geq - \frac{\varepsilon}{2}, \; \; \text{for all } t \in [0,\tilde{\mathit{T}_{1}}).
\end{equation*}
Observing that $ \tilde{\mathit{T}_{1}} $ does not depend on $\varepsilon$ and letting $ \varepsilon \rightarrow 0$ we have
\begin{equation*}
z_{1}(t) \geq y_{1}(t),  \;\;\; \text{for all } t \in [0,\mathit{T}_{1} + \tilde{\mathit{T}_{1}}).
\end{equation*}
Due to $ \tilde{\mathit{T}_{1}} > 0 $, we get a contradiction to the definition of $ \mathit{T}_{1} $. Obtaining the desired result, $ \mathit{T}_{1} = \mathit{T}_{2}=\tau$.\hfill
\end{proof}

\begin{proof}[Proof of Theorem \ref{tdte}]
Taking $\liminf$ on both sides of the inequality (\ref{ecintexp}) we have
\begin{align*}
\liminf\limits_{||x|| \rightarrow \infty} u_{i}(t,x) \geq & \liminf\limits_{||x|| \rightarrow \infty} \int_{\mathbb{R}^{n}} p(K_{i}(t),y) \varphi_{i} (y-x) dy \\
& + \liminf\limits_{||x|| \rightarrow \infty} \int_{0}^{t} h_{i} (s) \int_{\mathbb{R}^{n}} p(\mathit{K}_{i}(s,t),y) u_{i}^{p_{ii}}(s,y-x) u_{j}^{p_{ij}} (s,y-x) dy ds.
\end{align*}
Fatou's lemma yields
\begin{align*}
\liminf\limits_{||x|| \rightarrow \infty} u_{i}(t,x) \geq & \int_{\mathbb{R}^{n}} p(K_{i}(t),y) \liminf\limits_{||x|| \rightarrow \infty} \varphi_{i} (y-x) dy \\
& + \int_{0}^{t} h_{i} (s) \int_{\mathbb{R}^{n}} p(\mathit{K}_{i}(s,t),y) \liminf\limits_{||x|| \rightarrow \infty} u_{i}^{p_{ii}}(s,y-x) u_{j}^{p_{ij}} (s,y-x) dy ds \\
\geq & \liminf\limits_{||x|| \rightarrow \infty} \varphi_{i} (x) + \int_{0}^{t} h_{i} (s) \int_{\mathbb{R}^{n}} p(\mathit{K}_{i}(s,t),y) \left(  \liminf\limits_{||x|| \rightarrow \infty} u_{i}(s,x) \right)^{p_{ii}}  \\
& \times \left( \liminf\limits_{||x|| \rightarrow \infty} u_{j} (s,x)\right) ^{p_{ij}} dy ds. 
\end{align*}
Given $ y \in \mathbb{R}^{n} $ the limits on the right hand side of the above inequality does not depend on $ y $, so we can use (\ref{intp}) to get
\begin{equation}
\liminf\limits_{||x|| \rightarrow \infty} u_{i}(t,x) \geq  \Vert \varphi_{i} \Vert_{u} + \int_{0}^{t} h_{i} (s) \left( \liminf\limits_{||x|| \rightarrow \infty} u_{i}(s,x) \right)^{p_{ii}} \left( \liminf\limits_{||x|| \rightarrow \infty} u_{j} (s,x)\right) ^{p_{ij}} ds.\label{ecpellim}
\end{equation}

Otherwise, taking the uniform norm in (\ref{ecintexp}) and using (\ref{intp}) we obtain
\begin{equation}
||u_{i} (t)||_{u}\leq ||\varphi_{i}||_{u} + \int_{0}^{t} h_{i}(s) ||u_{i}(s)||_{u}^{p_{ii}} ||u_{j}(s)||_{u}^{p_{ij}} ds.\label{ecpnorma}
\end{equation}

Introducing the auxiliary functions  
\begin{equation*}
w_{i} (t) = \liminf\limits_{\vert x \vert \rightarrow \infty} u_{i}(t,x), \;\;\; \tilde{w}_{i}(t) =||u_{i} (t)||_{u},
\end{equation*}
the inequalities (\ref{ecpellim}) and (\ref{ecpnorma}) can be written, for each $t\in[0,\tau_{(\ref{eedif})})$, as
\begin{align*}
w_{i} (t) \geq ||\varphi_{i}||_{u}+ \int_{0}^{t} h_{i} (s) w_{i}^{p_{ii}}(s) w_{j}^{p_{ij}}(s) ds, &
\\
\tilde{w}_{i} (t) \leq ||\varphi_{i}||_{u} + \int_{0}^{t} h_{i} (s) \tilde{w}_{i}^{p_{ii}} (s) \tilde{w}_{j}^{p_{ij}}(s) ds.
\end{align*}

The comparison Theorem \ref{lcpo} implies
\begin{equation*}
\tilde{w}_{i} (t) \leq y_{i}(t) \leq w_{i}(t),\ \ \text{for all } \ t \in [0,\tau_{(\ref{eedif})}),
\end{equation*}
then $ \tau_{(\ref{edoa})} = \tau_{(\ref{eedif})}$. Moreover, the above inequality also implies
\begin{equation*}
w_{i} (t) = \liminf\limits_{||x|| \rightarrow \infty} u_{i}(t,x) \leq \limsup\limits_{||x|| \rightarrow \infty} u_{i}(t,x) \leq ||u_{i} (t)||_{u} = \tilde{w}_{i} (t) \leq w_{i} (t).
\end{equation*}
Hence it follows that $y_{i}(t)=\lim_{||x|| \rightarrow \infty} u_{i}(t,x)$.\hfill
\end{proof} 

\section{Blow-up in finite time at space infinite}\label{SBlow}
In this section we will see that there is blow-up in finite time, $||u_{1}(\tau_{(\ref{eedif})}-)||_{u}+||u_{2}(\tau_{(\ref{eedif})}-)||_{u}$$=\infty$, but the blow-up is just at space infinite, $|u_{i}(\tau_{(\ref{eedif})}-,x)|<\infty$, for each $x\in\mathbb{R}^{n}$.

\begin{lemma}\label{respreetf}
Let $(u_{1},u_{2})$ be the solution of (\ref{eedif}) in $ [0,\tau_{(\ref{eedif})}) \times \mathbb{R}^{n}$. Suppose that for each $x_{0}\in \mathbb{R}^{n}$ and $\rho_{0}>0$ there are $t_{0}=t_{0}(x_{0},\rho_{0}) \in (\tau_{(\ref{eedif})}/2,\tau_{(\ref{eedif})})$ and $\theta=\theta(x_{0},\rho_{0}) \in (0,1) $ such that 
\begin{equation} \label{hpac} 
u_{i}(t,x) \leq \theta y_{i}(t), \;\; \text{for all } (t,x) \in [t_{0},\tau_{(\ref{eedif})}) \times B (x_{0},\rho_{0}).
\end{equation}
Then
\begin{equation*} 
u_{i}(t,x) \leq \theta y_{i} (\tau_{(\ref{eedif})} - \varepsilon/2), \;\; \text{for all } (t,x) \in [t_{0},\tau_{(\ref{eedif})}) \times B (x_{0},\rho_{0}/2),
\end{equation*}
for some constant $0<\varepsilon=\varepsilon(\rho_{0}) <\tau_{(\ref{eedif})}/2$.  
\end{lemma}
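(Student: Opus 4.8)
The plan is to estimate directly on the mild formulation~(\ref{eeint}) and to combine three ingredients. First, by Theorem~\ref{tdte} one has the global bound $u_i(t,\cdot)\le y_i(t)$ on all of $\mathbb{R}^{n}$, so the reaction density $h_i(s)u_i^{p_{ii}}(s,y)u_j^{p_{ij}}(s,y)$ is everywhere $\le y_i'(s):=h_i(s)y_i^{p_{ii}}(s)y_j^{p_{ij}}(s)$, and by the hypothesis~(\ref{hpac}) (used for \emph{both} components) it is $\le\theta^{p_{ii}+p_{ij}}y_i'(s)$ on $B(x_0,\rho_0)$, with $\theta^{p_{ii}+p_{ij}}<\theta$ because $p_{ii}+p_{ij}>1$ under the running hypotheses. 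Second, since $k_i$ is continuous on the compact interval $[0,\tau_{(\ref{eedif})}]$ it is bounded there by some $K_i^{\ast}$, so the Gaussian mass put off $B(x_0,R)$ by $p(K_i(s,t),\cdot)$, seen from a point of $B(x_0,R')$ with $R'<R$, is at most $c\exp(-c(R-R')^{2}/(t-s))$. Third, an elementary ODE comparison gives that $Y:=\max\{y_1,y_2\}$ satisfies $Y'\le HY^{p}$ for $t$ close to $\tau_{(\ref{eedif})}$, where $p:=\max\{p_{11}+p_{12},p_{22}+p_{21}\}>1$ and $H:=\sup_{[0,\tau_{(\ref{eedif})}]}\max\{h_1,h_2\}$; consequently $y_i(t)\le c(\tau_{(\ref{eedif})}-t)^{-1/(p-1)}$ and $y_i'(t)\le c(\tau_{(\ref{eedif})}-t)^{-p/(p-1)}$, i.e.\ the blow-up of $y_i$ is at most polynomial.

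The heart of the argument is a single \emph{interior improvement step}. Fix radii $\rho_0/2\le R'<R\le\rho_0$ and $x\in B(x_0,R')$, split the Duhamel integral in~(\ref{eeint}) over $B(x_0,R)$ and over its complement, estimate the reaction by $\theta^{p_{ii}+p_{ij}}y_i'(s)$ on the first set and by $y_i'(s)$ on the second, control the complementary Gaussian mass by the second ingredient, bound the linear term by $\|\varphi_i\|_u=y_i(0)$, and combine everything with $y_i(t)=y_i(0)+\int_0^{t}y_i'$. This yields, for every $t\in[0,\tau_{(\ref{eedif})})$ and $x\in B(x_0,R')$,
\[
u_i(t,x)\ \le\ \theta^{p_{ii}+p_{ij}}y_i(t)+\big(1-\theta^{p_{ii}+p_{ij}}\big)\Big(y_i(0)+c\!\int_0^{t}e^{-c(R-R')^{2}/(t-s)}y_i'(s)\,ds\Big),
\]
and by the third ingredient the last integral is bounded, uniformly in $t<\tau_{(\ref{eedif})}$, by a finite constant $C(R-R')$ (the super-exponential factor dominates the polynomial growth of $y_i'$). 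In short, a bound $u_i\le\theta y_i(t)$ on $B(x_0,R)$ produces a bound $u_i\le\theta^{p_{ii}+p_{ij}}y_i(t)+$const on $B(x_0,R')$, contracting the coefficient of $y_i(t)$ from $\theta$ to $\theta^{p_{ii}+p_{ij}}<\theta$ at the price of an additive constant and of a small shrinking of the ball; the same holds for $u_j$.

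Then I would iterate this along $R_k:=\rho_0(\tfrac12+2^{-k-1})\downarrow\rho_0/2$, carrying bounds $u_i\le\theta_k y_i(t)+C_k$ on $[0,\tau_{(\ref{eedif})})\times B(x_0,R_k)$ for both $i$ (with $\theta_0=\theta$, $C_0=0$). To pass from level $k$ to level $k+1$ one first absorbs the constant: fixing $\theta_k'\in(\theta_k,1)$, for all $t$ with $y_i(t)\ge C_k/(\theta_k'-\theta_k)$ one has $\theta_k y_i(t)+C_k\le\theta_k'y_i(t)$, while on the complementary (bounded-$y_i$) time range $u_i$ is trivially bounded; the improvement step on $B(x_0,R_{k+1})$ then gives $\theta_{k+1}=(\theta_k')^{p_{ii}+p_{ij}}$ and an additive constant controlled by $C(R_k-R_{k+1})=C(\rho_0 2^{-k-2})$ and by $y_i$ at the threshold time. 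Choosing the $\theta_k'$ so that $\theta_{k+1}$ is a fixed power $>1$ of $\theta_k$ forces $\theta_k\to 0$, while the polynomial blow-up keeps $\{C_k\}$ bounded; passing to the limit gives $\limsup_{t\uparrow\tau_{(\ref{eedif})}}\sup_{B(x_0,\rho_0/2)}u_i(t,\cdot)<\infty$. Finally, keeping track of the constants one reads off that $u_i(t,x)\le\theta y_i(\tau_{(\ref{eedif})}-\varepsilon/2)$ on $[t_0,\tau_{(\ref{eedif})})\times B(x_0,\rho_0/2)$ for a suitable $\varepsilon=\varepsilon(\rho_0)\in(0,\tau_{(\ref{eedif})}/2)$: for $t\le\tau_{(\ref{eedif})}-\varepsilon/2$ it is automatic from~(\ref{hpac}) since $y_i$ is increasing, and for $t$ near $\tau_{(\ref{eedif})}$ it follows from the bounded limit of the iteration once $\varepsilon$ is chosen small enough.

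I expect the main obstacle to be exactly this last bookkeeping: one must pick the threshold times and the radii $R_k$ so that the additive constants $C_k$ remain bounded as the multiplicative constants $\theta_k\to 0$. This is where the at-most-polynomial blow-up rate of $y_i$ enters in an essential way, since it is what keeps the Gaussian correction integrals $\int_0^{t}e^{-c(R_k-R_{k+1})^{2}/(t-s)}y_i'(s)\,ds$ finite uniformly in $t<\tau_{(\ref{eedif})}$ for every $k$, even though $R_k-R_{k+1}$ decays geometrically in $k$.
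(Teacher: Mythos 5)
Your strategy (Duhamel formula plus Gaussian-tail splitting plus an iteration that drives the coefficient of $y_i(t)$ to zero) is genuinely different from the paper's, but it rests on a step that is wrong. The decisive gap is your third ingredient. From $Y'\le HY^{p}$ one can only deduce a \emph{lower} bound on the blow-up rate: integrating $Y'/Y^{p}\le H$ up to the blow-up time gives $Y(t)\ge\left((p-1)H(\tau_{(\ref{edoa})}-t)\right)^{-1/(p-1)}$. An upper bound $y_i(t)\le c(\tau_{(\ref{edoa})}-t)^{-1/(p-1)}$ would require a superlinear \emph{lower} differential inequality for $y_i$ in terms of $y_i$ alone, which the coupled system (\ref{edoa}) does not supply. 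In fact the polynomial rate is false under the running hypotheses: take $h_1\equiv h_2\equiv1$, $p_{11}=1$, $p_{12}=2$, $p_{21}=0$, $p_{22}=2$, so that (\ref{relentre}) holds, and write $\tau$ for $\tau_{(\ref{edoa})}=\tau_{(\ref{eedif})}$; then for suitable data $y_2(t)=(\tau-t)^{-1}$ and $(\log y_1)'=y_2^{2}$, hence $y_1(t)=c\exp\left(1/(\tau-t)\right)$, a super-polynomial blow-up. For such $y_1$ the quantity $\int_0^{t}e^{-c\delta^{2}/(t-s)}y_1'(s)\,ds$ is \emph{not} bounded uniformly in $t<\tau$ once $c\delta^{2}<1/2$ (test $s$ with $t-s\approx\tau-t$), and in your iteration $\delta_k=\rho_0 2^{-k-2}\to0$, so the estimate on which the whole scheme rests fails from some $k$ on. Even if a polynomial bound $y_i'(s)\le C(\tau-s)^{-m}$, $m>1$, were available, your additive constants could not stay bounded: $\int_0^{\tau}e^{-c\delta^{2}/u}u^{-m}\,du\to\infty$ as $\delta\to0$, while the prefactor $1-\theta_k^{p_{ii}+p_{ij}}$ multiplying the correction tends to $1$, not $0$; so the claim that the $C_k$ remain bounded is unsupported. (A smaller, fixable point: (\ref{hpac}) holds only for $s\ge t_0$, so the inside-the-ball part of the Duhamel integral over $[0,t_0]$ must be handled separately, at the harmless cost of $y_i(t_0)$.)

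Beyond these obstructions, your argument would at best give local boundedness on $B(x_0,\rho_0/2)$, whereas the lemma asserts the specific bound $\theta y_i(\tau_{(\ref{eedif})}-\varepsilon/2)$ with the same $\theta$ and with $\varepsilon$ depending only on $\rho_0$. The paper gets this without any information on the blow-up rate by a different device: it considers $z_i(t,x)=\theta y_i\left(t-\varepsilon g(|x-x_0|)\right)$ with $g(r)=\cos^{2}\left(\pi r/(2\rho_0)\right)$, shows that for $\varepsilon$ small enough $z_i$ is a supersolution of (\ref{eedif}) on $[t_0,\tau_{(\ref{eedif})})\times B(x_0,\rho_0/2)$ --- the two key points being $1-\theta^{p_{ii}+p_{ij}-1}>0$ and that $y_i$ and $y_i''/y_i'$ are only evaluated at times $\le\tau_{(\ref{eedif})}-\varepsilon/2$, i.e.\ on a compact subinterval where everything is bounded --- and then applies the parabolic comparison principle, using (\ref{hpac}) only on the parabolic boundary; monotonicity of $y_i$ and of $g$ then yields $u_i(t,x)\le\theta y_i(t-\varepsilon/2)\le\theta y_i(\tau_{(\ref{eedif})}-\varepsilon/2)$. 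If you want an alternative proof, an argument of this supersolution type is the natural target; the Gaussian-tail route cannot work without rate control that is unavailable here.
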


\begin{proof}
Let $(y_{1},y_{2})$ be the solution of the system (\ref{edoa}).  Extend the domain of $ y_{i} $ to $ (-\tau_{(\ref{eedif})},\tau_{(\ref{eedif})}) $ such that the extended function, also denoted by $ y_{i} $, is smooth. Set
\begin{equation*} 
g(r) = \cos^{2} \left( \dfrac{\pi r}{2\rho_{0}} \right), \;\; 0 \leq r < \dfrac{\rho_{0}}{2}.
\end{equation*}
Take $ 0 < \varepsilon < \tau_{(\ref{eedif})} $ and define
\begin{equation*} 
z_{i} (t,x) = \theta y_{i} (t-\varepsilon g(r)), \;\; (t,x) \in [t_{0},\tau_{(\ref{eedif})}] \times B(x_{0},\rho_{0}/2),
\end{equation*}
where $ r = \vert x-x_{0} \vert $. From (\ref{edoa}) we have
\begin{align*}
&\dfrac{\partial}{\partial t} z_{i} - \mathit{k}_{i}(t) \Delta z_{i} - h_{i}(t) z_{i}^{p_{ii}} z_{j}^{p_{ij}} = \theta y'_{i} (t- \varepsilon g(r)) \\
&\times\left\lbrace 1- \theta^{p_{ii}+p_{ij}-1}  - \varepsilon^{2} \mathit{k}_{i} (t) (g'(r))^{2} \, \dfrac{y''_{i}(t- \varepsilon g(r))}{y'_{i}(t- \varepsilon g(r))} + \varepsilon \mathit{k}_{i} (t) g''(r) + \frac{n-1}{r} \varepsilon \mathit{k}_{i} (t) g'_{i}(r) \right\rbrace.
\end{align*}

Observe that (using $|\sin(x)|\leq |x|$, $x\in \mathbb{R}$)
\begin{equation} 
\cos^{2} \left( \frac{\pi}{4} \right) \leq g(r) \leq 1, \;\; \vert g'(r) \vert \leq \frac{r}{2}\left(\frac{\pi}{\rho_{0}}\right)^{2}, \;\; 
\vert g''(r) \vert \leq \frac{1}{2} \left(\frac{\pi}{\rho_{0}} \right)^{2}, \;\;  r \in \left[0,\frac{\rho_{0}}{2}\right].  \label{estpag}
\end{equation}
Otherwise, using (\ref{edoa}) we obtain
\begin{equation} 
\dfrac{y''_{i}(t)}{y'_{i}(t)} = \frac{h'_{i}(t)}{h_{i}(t)} + p_{ii}h_{i}(t)y_{i}^{p_{ii}-1}(t)y_{j}^{p_{ij}}(t) + p_{ij}h_{j}(t)y_{i}^{p_{ji}}(t)y_{j}^{p_{jj}-1}(t).\label{fghdgh}
\end{equation}
Taking $0<\varepsilon<\tau_{(\ref{eedif})}/2<t_{0}<\tau_{(\ref{eedif})}$ we have
\begin{equation*} 
t - \varepsilon \leq t- \varepsilon g(r) < \tau_{(\ref{eedif})} - \varepsilon \cos^{2} \left( \frac{\pi}{4} \right),\ \ \text{for all } t\in [t_{0},\tau_{(\ref{eedif})}).
\end{equation*}
For each $t\in [t_{0},\tau_{(\ref{eedif})})$, the equality (\ref{fghdgh}), implies
\begin{align*} 
&\left| \dfrac{y''_{i}(t- \varepsilon g(r))}{y'_{i}(t- \varepsilon g(r))} \right| \leq \\
& \max\limits_{t \in \left[\frac{\tau_{(\ref{eedif})}}{2}, \tau_{(\ref{eedif})} - \varepsilon \cos^{2} \left( \frac{\pi}{4} \right) \right]} \left\lbrace \dfrac{\vert h'_{i}(t) \vert}{h_{i}(t)} + p_{ii}h_{i}(t)y_{i}^{p_{ii}-1}(t)y_{j}^{p_{ij}}(t) + p_{ij}h_{j}(t)y_{i}^{p_{ji}}(t)y_{j}^{p_{jj}-1}(t)\right\rbrace := K.
\end{align*}
The above estimation and (\ref{estpag}) lead us to
\begin{align*} 
&1 - \theta^{p_{ii}+p_{ij}-1} - \varepsilon^{2} \mathit{k}_{i} (t) \left( g'(r) \right)^{2} \dfrac{y''_{i}(t- \varepsilon g(r))}{y'_{i}(t- \varepsilon g(r))} + \varepsilon \mathit{k}_{i}(t) g''(r) + \frac{n-1}{r} \varepsilon \mathit{k}_{i}(t) g'(r) \\
&\geq  1 - \theta^{p_{ii}+p_{ij}-1} - \varepsilon \max\limits_{t \in \left[\frac{\tau_{(\ref{eedif})}}{2},\tau_{(\ref{eedif})}\right]} \mathit{k}_{i} (t)\left\lbrace \varepsilon \left( \frac{\pi}{\rho_{0}} \right)^{2} K + \frac{1}{2} \left( \frac{\pi}{\rho_{0}} \right)^{2} + \frac{n-1}{2} \left( \frac{\pi}{\rho_{0}} \right)^{3}  \right\rbrace.
\end{align*}
Since $p_{ii}+p_{ij}> 1$ and $\theta \in (0,1)$, then $1-\theta^{p_{ii}+p_{ij}-1}>0$. This implies that we can take $ \varepsilon=\varepsilon(\rho_{0})>0$ small enough for which
\begin{equation*} 
\dfrac{\partial}{\partial t} z_{i} (t,x) \geq \mathit{k}_{i} (t) \Delta z_{i} (t,x) + h_{i}(t) z_{i}^{p_{ii}} (t,x) z_{j}^{p_{ij}} (t,x).
\end{equation*}

Moreover, from (\ref{hpac}) we deduce the boundary conditions
\begin{align*} 
z_{i}(t,x) &\geq u_{i}(t,x),\ \  \text{for all } (t,x) \in [t_{0},\tau_{(\ref{eedif})}) \times \partial B (x_{0},\rho_{0}/2),\\
z_{i}(t_{0},x) &\geq u_{i}(t_{0},x),\ \  \text{for all } x \in B (x_{0},\rho_{0}/2).
\end{align*}
The comparison principle yields
\begin{equation*} 
u_{i}(t,x) \leq \theta y_{i}(t- \varepsilon g(r)),\ \ \text{for all } (t,x) \in [t_{0},\tau_{(\ref{eedif})}) \times B (x_{0},\rho_{0}/2).
\end{equation*}
Using that $ y_{i} $ is increasing and $ g $ is decreasing we get
\begin{equation*} 
u_{i}(t,x) \leq \theta y_{i} \left( t - \varepsilon \cos^{2} \left( \frac{\pi}{4} \right) \right), \ \ \text{for all } (t,x) \in [t_{0},\tau_{(\ref{eedif})}) \times B (x_{0},\rho_{0}/2).
\end{equation*}
From this the result follows easily. \hfill
\end{proof}

\bigskip
\begin{proof}[Proof of Theorem \ref{Thbenalin}]
We follow the ideas given in \cite{S-U}. Let $ x_{0} \in \mathbb{R}^{n} $ and $ r_{0} > 0 $ be fix and arbitrary. The strong maximum principle (see Theorem 1 in Chapter 2 of \cite{A-F}) implies that the solution $ (u_{1},u_{2}) $ of (\ref{eedif}) satisfies 
\begin{equation*}
u_{i} (t,x) < y_{i}(t), \;\; \text{for all } (t,x) \in (0,\tau_{(\ref{eedif})}) \times G,
\end{equation*}
for any compact set $ G \subset \mathbb{R}^{n}$. By a translation of time we can consider that the system (\ref{eedif}) begins at time $ t_{0} \in (0,\tau_{(\ref{eedif})}) $, then
\begin{equation*}
u_{i} (t_{0},x) < y_{i}(t_{0}), \;\; \text{for all } x \in \bar{B} (x_{0},r_{0}),
\end{equation*}
where $\bar{B} (x_{0},r_{0})=\{x \in \mathbb{R}^{n}:||x-x_{0}||\leq x_{0}\}$. Restarting the system at $t_{0}$ we may assume that $ t_{0} = 0 $. Therefore
\begin{equation*}
u_{i} (0,x) < y_{i}(0) = \Vert \varphi_{i} \Vert_{u}, \;\; \text{for all } x \in \bar{B} (x_{0},r_{0}).
\end{equation*}

Let $ \mathit{k}(t) := \mathit{k}_{1}(t) + \mathit{k}_{2}(t)$ and $ w (t,x) $ be the solution of (see \cite{A-F}) 
\begin{equation*}
\begin{array}{ll}
\dfrac{\partial w}{\partial t} = \mathit{k} (t) \Delta w, & x \in B(x_{0},r_{0}), t > 0, \\
w(t,x) = 1, & x \in \partial B(x_{0},r_{0}), t \geq 0,\\
w(0,x) \not\equiv 1, & x \in B(x_{0},r_{0}),\\
\max \left\lbrace \dfrac{u_{1}(0,x)}{y_{1}(0)}, \, \dfrac{u_{2}(0,x)}{y_{2}(0)} \right\rbrace \leq w(0,x) \leq 1, & x \in B(x_{0},r_{0}),\\
\Delta w(0,x) \geq 0, & x \in B(x_{0},r_{0}).
\end{array}
\end{equation*}

We are going to prove that $ (y_{1}w,y_{2}w) $ is a super solution of (\ref{eedif}), we means that
\begin{equation} \label{ssw}
y_{i} (t) w(t,x) \geq u_{i}(t,x), \;\;\;\; \text{for all } (t,x) \in (0,\tau_{(\ref{eedif})}) \times B (x_{0},r_{0}).
\end{equation}

The maximal principle implies (see Chapter 2 of \cite{A-F}),
\begin{equation} \label{deimpmp}
\Delta w(t,x) \geq 0, \;\; \text{for all } (t,x) \in (0,\infty) \times B (x_{0},r_{0}).
\end{equation}

Using the hypothesis $p_{ii}+p_{ij}> 1$, (\ref{edoa}), (\ref{deimpmp}) and $ w(t,x) \leq 1 $ we can conclude that
\begin{equation*}
\begin{split}
\dfrac{\partial}{\partial t} (y_{i}w) & = y'_{i} w + y_{i}\dfrac{\partial}{\partial t} w \\
& = h_{i}(t)y_{i}^{p_{ii}}(t)y_{j}^{p_{ij}}(t) w + y_{i}(t) \mathit{k}(t)\Delta w \\
& \geq h_{i}(t) \left( y_{i}(t) w \right)^{p_{ii}} \left( y_{j}(t) w \right)^{p_{ij}} + \mathit{k}_{i}(t)\Delta (y_{i}w).
\end{split}
\end{equation*}
Moreover, we see that
\begin{equation*} 
y_{i}(t) w(t,x) = y_{i}(t) \geq u_{i}(t,x), \;\; \text{for all } (t,x) \in (0,\tau_{(\ref{eedif})}) \times \partial B (x_{0},r_{0})
\end{equation*}
and
\begin{equation*} 
y_{i}(0) w(0,x) \geq y_{i}(0) \dfrac{u_{i}(0,x)}{y_{i}(0)} = u_{i}(0,x), \;\; \text{for all } x \in B (x_{0},r_{0}),
\end{equation*}
then we are able to apply the comparison principle to deduce (\ref{ssw}).

On the other hand, the strong maximum principle implies
\begin{equation} \label{epw}
0 \leq w(t,x) < 1, \;\; \text{for all } (t,x) \in (0,\infty) \times B (x_{0},r_{0}).
\end{equation}
For each $\tilde{r}_{0} \in (0,r_{0}) $ and $ t_{0} \in (\tau_{(\ref{eedif})}/2,\tau_{(\ref{eedif})}) $ we set
\begin{equation*} 
\theta =\theta(\tilde{r}_{0},t_{0}):= \sup \lbrace w(t,x): (t,x) \in [t_{0},\tau_{(\ref{eedif})}] \times \bar{B} (x_{0},\tilde{r}_{0}) \rbrace.
\end{equation*}
By (\ref{epw}) we have $ \theta < 1 $ and
\begin{equation} 
u_{i} (t,x) \leq w(t,x)y_{i}(t)\leq \theta y_{i}(t), \;\; \text{for all } (t,x) \in [t_{0},\tau_{(\ref{eedif})}) \times \bar{B} (x_{0},\tilde{r}_{0}).\label{aplcomp}
\end{equation}

For each $R>0$ let us consider the compact ball $\bar{B}(0,R)$. For each $x\in \bar{B} (0,R)$ the inequality (\ref{aplcomp}) implies (with $x_{0}=x$, $r_{0}=R$, $\tilde{r}_{0}=R/2=\rho_{0}$) that we can use Lemma \ref{respreetf}. Then there exist $t_{x} \in (\tau_{(\ref{eedif})}/2,\tau_{(\ref{eedif})})$ and $\theta_{x} \in (0,1) $ such that 
\begin{equation*} 
u_{i}(t,x) \leq \theta_{x} y_{i} (\tau_{(\ref{eedif})} - \varepsilon_{R}/2), \;\; \text{for all } (t,x) \in [t_{x},\tau_{(\ref{eedif})}) \times B (x,R/4),
\end{equation*}
for some $\varepsilon_{R}>0$. The family $\{B (x,R/4):x\in \bar{B}(0,R/4)\}$ is an open cover of $\bar{B}(0,R)$, then it admits a finite subcover $\{B (x_{1},R/4),...,B (x_{k},R/4)\}$. Let us take
$$\theta_{R}=\max\{\theta_{x_{1}},...,\theta_{x_{k}}\},\ \  t_{R}=\max\{t_{x_{1}},...,t_{x_{k}}\},$$
with such selection we obtain 
\begin{equation*} 
u_{i}(t,x) \leq \theta_{R} y_{i} (\tau_{(\ref{eedif})} - \varepsilon_{R}/2), \;\; \text{for all } (t,x) \in [t_{R},\tau_{(\ref{eedif})}) \times \bar{B}(0,R).
\end{equation*}
Then $\sup_{||x||\leq R} u_{i}(t,x)\leq \theta_{R} y_{i} (\tau_{(\ref{eedif})} - \varepsilon_{R}/2)$, for all $t\in [t_{R},\tau_{(\ref{eedif})})$. We conclude the proof taking $\limsup$ on $t$.\hfill
\end{proof}

\section{Bounds for the maximal existence time}\label{Sbounds}

In this section we recall a generalized version of Osgood's lemma and we will use this to obtain some bounds for the maximal existence time for the solutions of system (\ref{eedif}). We begin with the following comparison result. 

\begin{lemma}
Let $ (y_{1},y_{2}) $ be the positive solution of (\ref{edoa}) defined on $ [0,\tau_{(\ref{edoa})}) $. Let us assume that $ (\tilde{h_{i}})' \leq 0 $, for $i\in\{1,2\}$.
\begin{itemize}
\item[$(a)$] If  $a_{i} > 0 $ and $ \; a_{j} \neq 0$, then there exists a constant $c_{(\ref{epi})} > 0$ such that 
\begin{equation} \label{epi}
y_{i}(t) \geq c_{(\ref{epi})} (\tilde{h}_{i})^{1/a_{i}} (t) y_{j}^{a_{j}/a_{i}} (t), \;\; \text{for all } t \in [0,\tau_{(\ref{edoa})}).
\end{equation}
\item[$(b)$] If  $a_{i} > 0 $ and $ \; a_{j} = 0$, then there exists a constant $ c_{(\ref{epii})} > 0 $ such that 
\begin{equation} \label{epii}
y_{i}^{a_{i}}(t) \geq c_{(\ref{epii})} \tilde{h}_{i} (t) \log (y_{j}(t)), \;\; \text{for all } t \in [0,\tau_{(\ref{edoa})}).
\end{equation}
\item[$(c)$] If  $a_{i} = 0 $ and $ \; a_{j} = 0$, then
\begin{equation} \label{epiii}
y_{i}(t)\geq y_{i}(0)\left(\dfrac{y_{j}(t)}{y_{j}(0)}\right)^{\tilde{h}_{i}(t)}, \;\; \text{for all } t \in [0,\tau_{(\ref{edoa})}).
\end{equation}
\end{itemize}
\end{lemma}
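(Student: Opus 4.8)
The backbone of the argument is that, by the very definition $a_i=p_{ji}-p_{ii}+1$, the exponents telescope. Since $y_i'(t)=h_i(t)y_i^{p_{ii}}(t)y_j^{p_{ij}}(t)>0$ on $[0,\tau_{(\ref{edoa})})$ and $y_i(0)=\Vert\varphi_i\Vert_u>0$, each $y_i$ is positive and strictly increasing there, so that powers $y_i^{a_i}$ and logarithms $\log y_i$ are harmless to form and differentiate. Differentiating $y_i^{a_i}$ (when $a_i\neq0$) or $\log y_i$ (when $a_i=0$), inserting (\ref{edoa}) and using $a_i-1+p_{ii}=p_{ji}$, one gets $(y_i^{a_i})'=a_i\,h_i\,y_i^{p_{ji}}y_j^{p_{ij}}$ and $(\log y_i)'=h_i\,y_i^{p_{ji}}y_j^{p_{ij}}$, and symmetric formulas for $j$. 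Dividing the $i$- and $j$-expressions, the common cross term $y_i^{p_{ji}}y_j^{p_{ij}}$ cancels, leaving only $\tilde{h}_i=h_i/h_j$ and a constant. Each of (a), (b), (c) is thus reduced to a linear relation between the derivatives of two monotone quantities, which I integrate with the help of $(\tilde{h}_i)'\le0$.

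Case (a) ($a_i,a_j\neq0$) is the model. Here the relation reads $(y_i^{a_i})'=\tfrac{a_i}{a_j}\tilde{h}_i\,(y_j^{a_j})'$. For a constant $c>0$ to be fixed, set $\Phi(t)=c\,\tilde{h}_i(t)\,y_j^{a_j}(t)-y_i^{a_i}(t)$; substituting the relation, $\Phi'=c\,(\tilde{h}_i)'\,y_j^{a_j}+\big(c-\tfrac{a_i}{a_j}\big)\tilde{h}_i\,(y_j^{a_j})'$. The first summand is $\le0$ because $c>0$, $(\tilde{h}_i)'\le0$ and $y_j^{a_j}>0$; since $(y_j^{a_j})'$ has the sign of $a_j$ ($y_j$ being increasing), the second summand is $\le0$ once we take $c\le a_i/a_j$ in the case $a_j>0$, and automatically when $a_j<0$ (then $c-a_i/a_j>0$ while $(y_j^{a_j})'<0$). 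Hence $\Phi$ is nonincreasing, so $\Phi(t)\le\Phi(0)=c\,\tilde{h}_i(0)y_j^{a_j}(0)-y_i^{a_i}(0)$, and shrinking $c$ further to ensure $c\le y_i^{a_i}(0)/(\tilde{h}_i(0)y_j^{a_j}(0))$ makes $\Phi(0)\le0$. Thus $y_i^{a_i}(t)\ge c\,\tilde{h}_i(t)y_j^{a_j}(t)$ on $[0,\tau_{(\ref{edoa})})$; taking $a_i$-th roots (licit since $a_i>0$) yields (\ref{epi}) with $c_{(\ref{epi})}=c^{1/a_i}$.

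Case (b) ($a_j=0$) runs identically from $(y_i^{a_i})'=a_i\tilde{h}_i(\log y_j)'$ and $\Phi(t)=c\,\tilde{h}_i(t)\log y_j(t)-y_i^{a_i}(t)$ with $c\le a_i$, using $(\log y_j)'=y_j'/y_j>0$; the only new point is that $\log y_j$ can be negative, but then (\ref{epii}) holds trivially (its right-hand side being $<0$), so one just restarts the monotonicity argument from the first time $t_1$ at which $y_j=1$ (take $t_1=0$ if $y_j(0)\ge1$, imposing then also $c\le y_i^{a_i}(0)/(\tilde{h}_i(0)\log y_j(0))$), where $\Phi(t_1)\le-y_i^{a_i}(t_1)<0$ and thereafter $\log y_j\ge0$ gives $\Phi'\le0$. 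Case (c) ($a_i=a_j=0$) is cleaner and needs no constant: from $(\log y_i)'=\tilde{h}_i(\log y_j)'$ put $\Phi(t)=\tilde{h}_i(t)\log(y_j(t)/y_j(0))-\log(y_i(t)/y_i(0))$; then $\Phi(0)=0$ and, after the same cancellation, $\Phi'=( \tilde{h}_i)'\log(y_j/y_j(0))\le0$ because $y_j$ is increasing makes the logarithm $\ge0$, so $\Phi\le0$ and exponentiating gives (\ref{epiii}).

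I expect the telescoping differentiations and the sign check $\Phi'\le0$ to be routine; the one genuinely delicate point is the sign bookkeeping — in (a) the subcase $a_j<0$, where $a_i/a_j$ and $(y_j^{a_j})'$ are both negative, and in (b) the change of sign of $\log y_j$, which forces the split at the instant $y_j=1$ — together with verifying that the finitely many smallness requirements imposed on $c$ (for monotonicity of $\Phi$ and for $\Phi(0)\le0$) are mutually compatible, which they are, each being a strictly positive real number.
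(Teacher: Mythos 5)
Your proposal is correct and follows essentially the paper's own route: you work with the same auxiliary comparison function (a constant multiple of $\tilde h_i\,y_j^{a_j}$, resp. $\tilde h_i\log y_j$, resp. $\tilde h_i\log(y_j/y_j(0))$, measured against $y_i^{a_i}$ or $\log(y_i/y_i(0))$), exploiting the same telescoping identity $a_i-1+p_{ii}=p_{ji}$ and the hypothesis $(\tilde h_i)'\le 0$. The only real difference is technical: the paper integrates the inequality $J'-\frac{(\tilde h_i)'}{\tilde h_i}J\ge 0$ with an integrating factor, which in case (b) cancels the possibly negative $\log y_j$ term automatically, whereas you prove monotonicity $\Phi'\le 0$ directly and handle that sign issue by restarting at the first time $y_j=1$ (the inequality being trivial while $y_j<1$) — both arguments are valid, and yours is in fact slightly more direct in cases (a) and (c).
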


\begin{proof}
$(a)$ Let us define the function  $J:[0,\tau_{(\ref{edoa})}) \rightarrow \mathbb{R}$ as
\begin{equation} \label{defji}
J(t) = M y_{i}^{a_{i}} (t) - \tilde{h}_{i}(t) y_{j}^{a_{j}} (t), 
\end{equation}
where $ M $ is a constant that we are going to fix later. Using (\ref{edoa}) we get
\begin{equation*}
J'(t) = h_{i}(t) y_{i}^{p_{ji}} (t) y_{j}^{p_{ij}} (t) \left[ M a_{i} - a_{j} \right] - (\tilde{h}_{i})'(t) y_{j}^{a_{j}} (t).
\end{equation*}
Otherwise the definition (\ref{defji}) yields
\begin{equation*}
J'(t) - \dfrac{(\tilde{h}_{i})'(t)}{\tilde{h}_{i}(t)} J(t) = h_{i}(t) y_{i}^{p_{ji}} (t) y_{j}^{p_{ij}} (t) \left[ M a_{i} - a_{j} \right] - M \dfrac{(\tilde{h}_{i})'(t)}{\tilde{h}_{i}(t)} y_{i}^{a_{i}} (t).
\end{equation*}

If we take $ M > \max \left\lbrace  0, a_{j}/a_{i} \right\rbrace $ then the hypothesis $(\tilde{h}_{i})' \leq 0 $ implies
\begin{equation*}
J'(t) - \dfrac{(\tilde{h}_{i})'(t)}{\tilde{h}_{i}(t)} J(t) \geq 0.
\end{equation*}
Multiplying the above inequality by 
$$ \exp \left\lbrace -\int_{0}^{t} \dfrac{(\tilde{h}_{i})'(s)}{\tilde{h}_{i}(s)} ds \right\rbrace,  $$ 
we have
\begin{eqnarray*}
\left(  J(t) \exp \left\lbrace -\int_{0}^{t} \dfrac{(\tilde{h}_{i})'(s)}{\tilde{h}_{i}(s)} ds \right\rbrace \right)' &=& \left( J'(t) - \dfrac{(\tilde{h}_{i})'(t)}{\tilde{h}_{i}(t)} J(t) \right)\\
&&\times \exp \left\lbrace -\int_{0}^{t} \dfrac{(\tilde{h}_{i})'(s)}{\tilde{h}_{i}(s)} ds \right\rbrace \geq 0.
\end{eqnarray*}
From this we obtain
\begin{equation*}
J(t) \geq J(0) \exp \left\lbrace -\int_{0}^{t} \dfrac{(\tilde{h}_{i})'(s)}{\tilde{h}_{i}(s)} ds \right\rbrace.
\end{equation*}
To prove the inequality (\ref{epi}) it is sufficient if we can take $J(0) \geq 0$. But from (\ref{defji}) we see that we get such inequality if we take
\begin{equation*}
M >\max \left\{\frac{a_{j}}{a_{i}}, \tilde{h}_{i}(0) \dfrac{y_{j}^{a_{j}} (0)}{y_{i}^{a_{i}}(0)}\right\}.
\end{equation*}

$(b)$ In this case we define the function  $J:[0,\tau_{(\ref{edoa})}) \rightarrow \mathbb{R}$ as
\begin{equation*}
J(t) = M y_{i}^{a_{i}} (t) - \tilde{h}_{i}(t)\log (y_{j}(t)).
\end{equation*}
Proceeding as before we have
\begin{equation*}
J'(t) - \dfrac{(\tilde{h}_{i})'(t)}{\tilde{h}_{i}(t)} J(t) = h_{i}(t) y_{i}^{p_{ji}} (t) y_{j}^{p_{ij}} (t) \left[ M a_{i} - 1 \right] - M \dfrac{(\tilde{h}_{i})'(t)}{\tilde{h}_{i}(t)} y_{i}^{a_{i}} (t).
\end{equation*}
The desired inequality (\ref{epii}) follows if we take 
\begin{equation*}
M >\max \left\{\frac{1}{a_{i}}, \log(y_{j}(0)) \dfrac{\tilde{h}_{i}(0)}{y_{i}^{a_{i}}(0)}\right\}.
\end{equation*}

$(c)$ From (\ref{edoa}) we see that $ y'_{i} (t) \geq 0$, then $ y_{i} (t) \geq y_{i} (0)>0 $. Let us define the function  $J:[0,\tau_{(\ref{edoa})}) \rightarrow \mathbb{R}$ as
\begin{equation*}
J(t) = \log \left( \dfrac{y_{i} (t)}{y_{i} (0)} \right) - \tilde{h}_{i}(t) \log \left( \dfrac{y_{j} (t)}{y_{j} (0)} \right).
\end{equation*}
Using that $ p_{ij} = p_{jj} -1$ and $p_{ji} = p_{ii} -1$, then the derivative of $J$ can be written as
\begin{equation*}
J'(t) - \dfrac{(\tilde{h}_{i})'(t)}{\tilde{h}_{i}(t)} J(t) = - \dfrac{(\tilde{h}_{i})'(t)}{\tilde{h}_{i}(t)} \log \left(\dfrac{y_{i} (t)}{y_{i} (0)} \right) \geq 0.
\end{equation*}
As before we can deduce  
\begin{equation*}
\log \left( \dfrac{y_{i} (t)}{y_{i} (0)} \right) \geq \tilde{h}_{i}(t) \log \left( \dfrac{y_{j} (t)}{y_{j} (0)} \right), \;\; \text{for all } t \in [0,\tau_{(\ref{edoa})}).
\end{equation*}
From which the inequality (\ref{epiii}) is deduced readily. \hfill
\end{proof}

\bigskip
To state the generalized version of Osgood's lemma we introduce some nomenclature. For $y_{0}>0$ and $b,f:(0,\infty) \rightarrow (0,\infty)$ continuous fun\-cti\-ons let us define 
\begin{equation*}
B(x) = \int_{y_{0}}^{x} \dfrac{ds}{b(s)} \ \ \text{and} \ \  F(x) = \int_{0}^{x} f(s)ds.
\end{equation*}

\begin{lemma}
The solution of the ordinary differential equation
\begin{equation}
y'(t) =  f(t)b(y(t)),\ \ y(0) = y_{0},\label{eqosgood}
\end{equation}
with $ y_{0} > 0 $, is
\begin{equation*}
y(t) = B^{-1} (F(t)).
\end{equation*}
The domain of $ y $ is $ [0,F^{-1}(B(\infty)) $ if $ B(\infty) < F(\infty) $, or $ [0,\infty) $ if $ B(\infty) \geq F(\infty) $. The time $ \tau_{(\ref{eqosgood})} = F^{-1} (B(\infty)) $ is the blow-up time.
\end{lemma}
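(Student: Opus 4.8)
The plan is to solve (\ref{eqosgood}) by separation of variables and then read the maximal interval of existence directly off the resulting explicit formula.

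First I would record the elementary monotonicity facts. Since $b>0$ on $(0,\infty)$ and $y_{0}>0$, any positive solution $y$ of (\ref{eqosgood}) satisfies $y'(t)=f(t)b(y(t))>0$, hence $y$ is strictly increasing and $y(t)\ge y(0)=y_{0}>0$ on its interval of existence. Consequently $B$ is only ever evaluated on $[y_{0},\infty)$; there $s\mapsto 1/b(s)$ is continuous and positive, so $B$ is $C^{1}$, strictly increasing, with $B(y_{0})=0$ and $B(\infty)\in(0,\infty]$, and therefore $B:[y_{0},\infty)\to[0,B(\infty))$ is a bijection whose inverse is continuous, strictly increasing and satisfies $(B^{-1})'=b\circ B^{-1}$. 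Likewise $F$ is $C^{1}$, strictly increasing, $F(0)=0$, and $F:[0,\infty)\to[0,F(\infty))$ is a bijection.

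Next I would carry out the separation of variables. Dividing (\ref{eqosgood}) by $b(y(t))>0$ and integrating over $[0,t]$ gives $\int_{0}^{t} y'(s)/b(y(s))\,ds=\int_{0}^{t} f(s)\,ds=F(t)$; the substitution $u=y(s)$ (legitimate because $y$ is $C^{1}$ and strictly monotone) turns the left-hand side into $\int_{y_{0}}^{y(t)} du/b(u)=B(y(t))$. Hence every solution satisfies $B(y(t))=F(t)$, and since $B$ is injective this forces $y(t)=B^{-1}(F(t))$; in particular the solution is unique, even though $b$ is only assumed continuous, and no Lipschitz hypothesis is needed. Conversely, setting $y:=B^{-1}\circ F$ one checks directly, using the chain rule and $(B^{-1})'=b\circ B^{-1}$, that $y'(t)=F'(t)\,(B^{-1})'(F(t))=f(t)\,b(B^{-1}(F(t)))=f(t)b(y(t))$ and $y(0)=B^{-1}(F(0))=B^{-1}(0)=y_{0}$, so $y=B^{-1}\circ F$ is indeed the solution.

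Finally I would identify the maximal interval. The expression $B^{-1}(F(t))$ makes sense exactly when $F(t)$ lies in the range $[0,B(\infty))$ of $B$, i.e. when $F(t)<B(\infty)$. If $B(\infty)\ge F(\infty)$, then $F(t)<F(\infty)\le B(\infty)$ for every $t\ge0$, so the solution is global. If $B(\infty)<F(\infty)$, then $F(t)<B(\infty)$ is equivalent to $t<F^{-1}(B(\infty))=:\tau_{(\ref{eqosgood})}$, which is finite; moreover as $t\uparrow\tau_{(\ref{eqosgood})}$ we have $F(t)\uparrow B(\infty)$, whence $y(t)=B^{-1}(F(t))\uparrow\infty$ since $B^{-1}$ is an increasing bijection of $[0,B(\infty))$ onto $[y_{0},\infty)$. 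This shows $\tau_{(\ref{eqosgood})}=F^{-1}(B(\infty))$ is precisely the blow-up time. The argument is entirely routine; the only points requiring a word of care are the change of variables just mentioned and the behaviour of $B^{-1}$ at the right endpoint, neither of which is a genuine obstacle.
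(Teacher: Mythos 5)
Your argument is correct, and it is essentially the canonical elementary proof: the paper itself gives no argument for this lemma, simply citing \cite{V-M}, and the separation-of-variables computation you carry out (monotonicity of $y$, $B(y(t))=F(t)$, inversion of $B$, and reading the maximal interval off the range $[0,B(\infty))$, with $y(t)\uparrow\infty$ as $t\uparrow F^{-1}(B(\infty))$ justifying the name ``blow-up time'') is exactly what that reference supplies. Your observation that uniqueness needs no Lipschitz condition, only $b>0$, is a correct and worthwhile remark; the only hypothesis you use tacitly, namely that $F(x)=\int_{0}^{x}f(s)\,ds$ is finite (i.e.\ $f$ integrable at $0$), is likewise tacit in the lemma's statement and in its use in the paper, so it is not a gap in your proof.
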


\begin{proof}
An elementary proof can be seen in \cite{V-M}. \hfill
\end{proof}

\begin{proposition}\label{wecandance}
Let $ (y_{1},y_{2}) $ be the positive solution of (\ref{edoa}) defined on $ [0,\tau_{(\ref{edoa})}) $. If for some $i,j\in\{1,2\}$, $ p_{ii} > 1$ and 
\begin{equation}
\int_{0}^{\infty} h_{i}(s) ds > \frac{1}{p_{ii}-1}\cdot\dfrac{y_{i}^{1-p_{ii}}(0)}{y_{j}^{p_{ij}}(0)}.\label{indpropo}
\end{equation}
then $\tau_{(\ref{edoa})}<\infty$.
\end{proposition}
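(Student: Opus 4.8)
The plan is to decouple the system by exploiting the monotonicity of $y_j$, thereby reducing the problem to a scalar Bernoulli-type differential inequality for $y_i$, and then to integrate it explicitly.

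First I would note that, by (\ref{edoa}) and the positivity of $h_j$ and of the solution, $y_j'(t)=h_j(t)y_j^{p_{jj}}(t)y_i^{p_{ji}}(t)\ge 0$, so $y_j$ is non-decreasing on $[0,\tau_{(\ref{edoa})})$ and hence $y_j(t)\ge y_j(0)>0$. Since $p_{ij}\ge 0$, this yields $y_j^{p_{ij}}(t)\ge y_j^{p_{ij}}(0)$, and the $i$-th equation of (\ref{edoa}) gives the differential inequality
\[
y_i'(t)\ \ge\ y_j^{p_{ij}}(0)\,h_i(t)\,y_i^{p_{ii}}(t),\qquad t\in[0,\tau_{(\ref{edoa})}).
\]
Dividing by $y_i^{p_{ii}}(t)>0$ and integrating from $0$ to $t$, using $p_{ii}>1$, I would obtain
\[
\frac{y_i^{1-p_{ii}}(0)-y_i^{1-p_{ii}}(t)}{p_{ii}-1}\ \ge\ y_j^{p_{ij}}(0)\int_0^t h_i(s)\,ds .
\]
Because $p_{ii}>1$ and $y_i(t)>0$ we have $y_i^{1-p_{ii}}(t)>0$, so for every $t\in[0,\tau_{(\ref{edoa})})$,
\[
\int_0^t h_i(s)\,ds\ \le\ \frac{1}{p_{ii}-1}\cdot\frac{y_i^{1-p_{ii}}(0)}{y_j^{p_{ij}}(0)} .
\]

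Finally I would argue by contradiction: if $\tau_{(\ref{edoa})}=\infty$, then since $h_i>0$ the function $t\mapsto\int_0^t h_i(s)\,ds$ is non-decreasing, so letting $t\to\infty$ in the previous line gives $\int_0^\infty h_i(s)\,ds\le \tfrac{1}{p_{ii}-1}\cdot y_i^{1-p_{ii}}(0)/y_j^{p_{ij}}(0)$, contradicting (\ref{indpropo}); hence $\tau_{(\ref{edoa})}<\infty$. (Equivalently, one could apply the generalized Osgood lemma recalled above with $b(s)=s^{p_{ii}}$ and $f(t)=y_j^{p_{ij}}(0)h_i(t)$, together with a comparison argument showing $y_i$ dominates the solution of $z'=f(t)b(z)$, $z(0)=y_i(0)$; the direct integration is just this computation carried out by hand.) There is no serious obstacle here: the only point needing care is the observation that $y_j$ is non-decreasing — this is what makes the estimate self-contained in $y_i$ — after which everything reduces to a routine one-variable calculation and a monotone limit.
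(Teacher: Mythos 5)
Your proposal is correct and follows essentially the same route as the paper: both exploit the monotonicity of $y_{j}$ to bound $y_{j}^{p_{ij}}(t)\geq y_{j}^{p_{ij}}(0)$ and thereby reduce (\ref{edoa}) to the scalar differential inequality $y_{i}'(t)\geq y_{j}^{p_{ij}}(0)\,h_{i}(t)\,y_{i}^{p_{ii}}(t)$. The paper finishes by comparing with the auxiliary equation $z_{i}'=y_{j}^{p_{ij}}(0)h_{i}z_{i}^{p_{ii}}$ via Theorem \ref{lcpo} and invoking the generalized Osgood lemma, while you integrate the Bernoulli-type inequality directly and conclude by contradiction; as you observe yourself, this is the same computation carried out by hand, and it yields the same explicit bound on $\tau_{(\ref{edoa})}$.
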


\begin{proof}
From (\ref{edoa}) we see that $y_{j}$ is increasing ($y_{j}'\geq 0$), then $ y_{j} (t) \geq y_{j} (0) $. Using this in (\ref{edoa}) yields
\begin{equation*}
y_{i}' (t) \geq h_{i} (t) y_{i}^{p_{ii}} (t) \left( y_{j}(0) \right)^{p_{ij}}.
\end{equation*}
Now let us consider the ordinary differential equation
\begin{equation}\label{epze}
z_{i}'(t) = \left( y_{j}(0) \right)^{p_{ij}} h_{i} (t) z_{i}^{p_{ii}} (t), \ \ z_{i}(0)=y_{i}(0).
\end{equation}
The comparison Theorem \ref{lcpo} implies that $ y_{i} (t) \geq z_{i} (t) $. On the other hand, using the notation on the Osgood's lemma we have
$$B_{i}(\infty)=\int_{y_{i}(0)}^{\infty}\frac{ds}{s^{p_{ii}}}=\frac{y_{i}^{1-p_{ii}}(0)}{p_{ii}-1}$$
and 
$$F_{i}(\infty)=y_{j}^{p_{ij}}(0)\int_{0}^{\infty}h_{i}(s)ds.$$
The hypothesis (\ref{indpropo}) implies $F_{i}(\infty)>B_{i}(\infty)$, then Osgood's lemma brings about $\tau_{(\ref{edoa})}\leq \tau_{(\ref{edoa}),i}\leq\tau_{(\ref{epze}),i}\leq F_{i}^{-1}(B_{i}(\infty))<\infty.$\hfill
\end{proof}

\bigskip
\begin{proof}[Proof of Theorem \ref{thest}] $(a.1)$ From the system (\ref{edoa}) and the estimation (\ref{epi}) we get
\begin{align*}
y'_{j} (t) & = h_{j}(t)y_{j}^{p_{jj}}(t)y_{i}^{p_{ji}}(t)\\
& \geq h_{j}(t)y_{j}^{p_{jj}}(t) \left( c_{(\ref{epi})} \tilde{h}_{i}^{1/a_{i}} (t) y_{j}^{a_{j}/a_{i}} (t) \right)^{p_{ji}} \\
& = f_{1,j} (t) y_{j}^{\alpha_{j}} (t),
\end{align*}
where
\begin{equation*}
f_{1,j}(t) = c_{(\ref{epi})}^{p_{ji}} h_{j} (t) \tilde{h}_{i}^{p_{ji}/a_{i}} (t).
\end{equation*}
Let us consider the associated ordinary differential equation
\begin{equation} \label{e1j}
z_{j}'(t) = f_{1,j}(t)z_{j}^{\alpha_{j}} (t),\ \ z_{j}(0) = y_{j} (0).
\end{equation}
By the comparison Theorem \ref{lcpo}  we have, $ y_{j} \geq z_{j} $. Therefore $ \tau_{(\ref{edoa}),j} \leq \tau_{(\ref{e1j}),j}$, then the result follows from Osgood's lemma.\\
$(a.2)$ The estimation (\ref{epi}) turns out
\begin{equation*}
y_{j} (t) \leq c_{(\ref{epi})}^{-a_{i}/a_{j}} \tilde{h}_{i}^{-1/a_{j}}(t) y_{i}^{a_{i}/a_{j}} (t),
\end{equation*}
and using the system (\ref{edoa}) we obtain
\begin{equation*}
y'_{i} (t) \leq f_{1,i} (t) y_{i}^{\alpha_{i}} (t),
\end{equation*}
where
\begin{equation*}
f_{1,i} (t) = c_{(\ref{epi})}^{-p_{ij}a_{i}/a_{j}} h_{i}(t) \tilde{h}_{i}^{-p_{ij}/a_{j}}(t).
\end{equation*}
The corresponding differential equation is
\begin{equation} \label{e1i}
z_{i}'(t) = f_{1,i}(t)z_{i}^{\alpha_{i}} (t),\ \ z_{i}(0) = y_{i} (0).
\end{equation}
By the comparison Theorem \ref{lcpo}  we have $\tau_{(\ref{e1i}),i} \leq \tau_{(\ref{edoa}),i}$. In this case we would like to have $\tau_{(\ref{e1i}),i} = \infty $ and this is ensured in the following cases.

If $ \alpha_{i} = 1 $, then
\begin{equation*}
z_{i} (t) = y_{i}(0) e^{F_{1,i}(t)}, \;\;\; t \in [0,\infty).
\end{equation*}

If $ \alpha_{i} < 1 $, then
\begin{equation*}
z_{i} (t) = \left( y_{i}^{1-\alpha_{i}} (0) + (1-\alpha_{i}) F_{1,i}(t) \right)^{1/(1-\alpha_{i})}, \;\;\; t \in [0,\infty).
\end{equation*}

If $ \alpha_{i} > 1 $ and $ F_{1,i} (\infty) \leq y_{i}^{1-\alpha_{i}} (0) / (\alpha_{i}-1) $, then the result is deduced using the Osgood's lemma, as before.\\
$(b.1)$ Due to $y_{j}(t)\geq y_{j}(0)> 1$, then (\ref{edoa}) and (\ref{epii}) implies
\begin{equation*}
y'_{j} (t) \geq f_{2,j} (t) \varphi_{2,j} (y(t)),
\end{equation*}
where
\begin{equation*}
f_{2,j}(t) = c_{(\ref{epii})}^{p_{ji}} h_{j} (t) \tilde{h}_{i}^{p_{ji}/a_{i}} (t), \;\; \varphi_{2,j} (x) = x^{p_{jj}}\left( \log x\right)^{p_{ji}/a_{i}}.
\end{equation*}
Considering the differential equation
\begin{equation} \label{e2j}
z'_{j} (t) =   f_{2,j}(t) \varphi_{2,j} (z_{j}(t)), \ \ z_{j} (0) =  y_{j} (0), 
\end{equation}
we have, by the comparison Theorem \ref{lcpo}, $ y_{j} \geq z_{j} $. Hence $ \tau_{(\ref{edoa}),j} \leq \tau_{(\ref{e2j})} $. The result follows form Osgood's lemma. \\
$(b.2)$ The inequality (\ref{epii}) turns out
\begin{equation*}
y_{j} (t) \leq \exp \left\lbrace  c_{(\ref{epii})}^{-1} (\tilde{c}_{i})^{-1} y_{i}^{a_{i}} (t) \right\rbrace,
\end{equation*}
therefore from (\ref{edoa}) we deduce
\begin{equation*}
y'_{i} (t) \leq h_{i}(t) \varphi_{2,i} (y_{i}(t)),
\end{equation*}
where 
\begin{equation*}
\varphi_{2,i} (x) = x^{p_{ii}} \exp \left\lbrace p_{ij} c_{(\ref{epii})}^{-1} (\tilde{c}_{i})^{-1} x^{a_{i}} \right\rbrace.
\end{equation*}
Considering the equation
\begin{equation} \label{e2i}
z_{i}'(t) = h_{i}(t)\varphi_{2,i} (z_{i}(t)),\ \ z_{i}(0) = y_{i} (0),
\end{equation}
the comparison Theorem \ref{lcpo} implies $ y_{i} \leq z_{i} $, then $ \tau_{(\ref{e2i})} \leq \tau_{(\ref{edoa}),i}$. Again, the result is con\-se\-quence of Osgood's lemma.\\
$(c.1)$ From (\ref{edoa}) and (\ref{epiii}) we have
\begin{equation*}
y'_{j} (t) \geq f_{3,j}(t) y_{j}^{\beta_{j}} (t),
\end{equation*}
where
\begin{equation*}
f_{3,j}(t) =\left( \dfrac{y_{i}(0)}{y_{j}^{\tilde{c}_{i}} (0)} \right)^{p_{ij}} h_{j} (t).
\end{equation*}
To get the result we proceed as we did in the case {\bf (a.1)}.\\
$(c.2)$ Using (\ref{epiii}) we get
\begin{equation*}
y_{j} (t) \leq y_{j} (0) \left( \dfrac{y_{i}(t)}{y_{i}(0)} \right)^{1/\tilde{c}_{i}},
\end{equation*}
then (\ref{edoa}) implies
\begin{equation*}
y'_{i} (t) \leq f_{3,i}(t) y_{i}^{\gamma_{i}} (t),
\end{equation*}
where 
\begin{equation*}
f_{3,i}(t) =\left( \dfrac{y_{j}(0)}{y_{i}^{1/ \tilde{c}_{i}} (0)} \right)^{p_{ij}} h_{i} (t).
\end{equation*}
Now proceeding as we did in the case $(a.2)$ we get the desired result.\hfill
\end{proof}

\bigskip
\begin{proof}[Proof of Corollary \ref{Corollary}]
The case $ p_{11} > 1 $ or $ p_{22} > 1 $ follows from Proposition \ref{wecandance} since 
\begin{equation*}
\int_{0}^{\infty} h_{i}(s) ds = \infty.
\end{equation*}
From Theorem \ref{thest} $(a.1)$ we see that $\tau_{(\ref{edoa})} < \infty $ if
\begin{equation}\label{edpe}
\left( a_{1} > 0,\ \alpha_{2} > 1 \right) \;\; \text{or} \;\; \left( a_{2} > 0,\ \alpha_{1} > 1 \right). 
\end{equation}
Therefore it is enough to see that (\ref{edpe}) is equivalent to
\begin{equation}\label{fdpx}
\left( p_{11}-1 \right) \left( p_{22}-1 \right) - p_{12}p_{21} < 0.
\end{equation}

Let us suppose that
\begin{equation*}
p_{21} - p_{11} + 1 > 0, \;\; p_{22}+p_{21}\cdot \dfrac{p_{12} - p_{22} + 1}{p_{21}-p_{11}+1} > 1,
\end{equation*}
then
\begin{equation*}
p_{21}p_{12} - p_{22}p_{11} + p_{22} + p_{11} > 1,
\end{equation*}
and this implies (\ref{fdpx}). Analogously we see that $ a_{2} > 0 $, $ \alpha_{1} > 1 $ implies (\ref{fdpx}).

To see the reciprocal statement notice that $ a_{1} \leq 0 $ and $ a_{2} \leq 0 $, implies
\begin{equation*}
p_{11}-1 \geq p_{21} \;\; \text{and} \;\; p_{22}-1 \geq p_{12}
\end{equation*}
then
\begin{equation*}
\left( p_{11}-1 \right) \left( p_{22}-1 \right) \geq p_{21}p_{12},
\end{equation*}
which is a contradiction to (\ref{fdpx}). Therefore $ a_{1} > 0 $ or $ a_{2} > 0 $, from this and the inequality (\ref{fdpx}) we get (\ref{edpe}). \hfill 
\end{proof}

\subsection*{Acknowledgment}

This work was partially supported by the grant PIM20-1 of Universidad Aut\'onoma de Aguascalientes.

\bigskip


\begin{thebibliography}{90}

\bibitem{V-A} A. C. Andrade-Gonz\'alez, J. Villa-Morales. {\it Critical dimension for a system of partial differential equations with time-dependent generators}, Math. Methods in the App. Sci. {\bf 38} (2014), 2517-2526.

\bibitem{B} B. G. Bartle. The elements of real analysis, {\it Wiley} (1976).

\bibitem{V-M} M. J. Ceballos-Lira, J. E. Mac\'ias-D\'iaz, J. Villa. {\it A generalization of Osgood's test and a comparison criterion for integral equations with noise}, Electron. J. Diff. Eqns. Vol. 2011 (2011), No. 05, 1-8.

\bibitem{E-L} M. Escobedo, H. A. Levine. {\it Critical blow-up and global existence numbers for a weakly coupled system of reaction-diffusion equations}, Arch. Rational Mech. Anal. {\bf 129} (1995), 47-100.

\bibitem{A-F} A. Friedman. Partial differential equations of parabolic type, {\it Prentice-Hall} (1964).

\bibitem{G-U} Y. Giga, N. Umeda. {\it On blow-up at space infinity for semilinear heat equations}, J. Math. Anal. Appl. {\bf 316} (2006), 538-555.

\bibitem{J-B} M. Jleli, B. Samet. {\it The decay of mass for a nonlinear fractional reaction-diffusion equation}, Math. Meth. Appl. Sci. {\bf 38} (2015), 1369-1378.

\bibitem{Q-R} F. Quir\'os, J. D. Rossi. {\it Non-simultaneous blow-up in a semilinear parabolic system}, Z. Angew. Math. Phys. {\bf 52} (2001), 342-346.

\bibitem{Pazy} A. Pazy. Semigroups of Linear Operators and Applications to Partial Differential Equations, {\it Springer-Verlag} (1983).

\bibitem{S} M. Shimojo. {\it The global profile of blow-up at space infinity in semilinear heat equations}, J. Math. Kyoto Univ. {\bf 48} (2008), 339-361.

\bibitem{S-U} M. Shimojo, N. Umeda. {\it Blow-up at space infinity for solutions of cooperative reaction-diffusion systems}, Funkcialaj Ekvacioj {\bf 54} (2011), 315-334.



\end{thebibliography}
\end{document}